\let\uml\"
\newtheorem{theorem}{Theorem}[section]
\newtheorem{lemma}[theorem]{Lemma}
\newtheorem{definition}[theorem]{Definition}
\newtheorem{remark}[theorem]{Remark}
\newtheorem{proposition}[theorem]{Proposition}
\newtheorem{problem}[theorem]{Problem}
\numberwithin{equation}{section}
\title[The random Kakutani fixed point theorem]{The random Kakutani fixed point theorem in random normed modules}
\author{Qiang Tu}
\address{School of Mathematics and Statistics, Central South University,
	Changsha {\rm 410083}, China}
\email{qiangtu126@126.com}
\thanks{This work was supported by the National Natural Science Foundation of China (Grant Nos.12371141, 12426645, 12426654) and the Natural Science Foundation of Hunan Province of China (Grant No.2023JJ30642).}
\author{Xiaohuan Mu}
\address{School of Mathematics and Statistics, Central South University,
	Changsha {\rm 410083}, China}
\email{xiaohuanmu@163.com}
\author{Tiexin Guo*}
\address{School of Mathematics and Statistics, Central South University,
	Changsha {\rm 410083}, China}
\email{tiexinguo@csu.edu.cn}
\thanks{*Corresponding author}
\author{Guang Yang}
\address{School of Mathematics and Statistics, Central South University,
	Changsha {\rm 410083}, China}
\email{guuangyang@163.com}
\author{Yuanyuan Sun}
\address{School of Mathematics and Statistics, Central South University,
	Changsha {\rm 410083}, China}
\email{yuanyuansun1205@163.com}
\keywords{Random normed modules, Random Kakutani fixed point theorem, Random sequentially compact sets, upper semicontinuous set-valued mappings}
\subjclass[2010]{46H25, 47H10, 46A22}
\begin{document}
	
	\begin{abstract}
	Based on the recently developed theory of random sequential compactness, we prove the random Kakutani fixed point theorem in random normed modules: if $G$ is a random sequentially compact $L^{0}$-convex subset of a random normed module, then every $\sigma$-stable $\mathcal{T}_{c}$-upper semicontinuous mapping $F:G\rightarrow 2^{G}\setminus \{\emptyset\}$ such that $F(x)$ is closed and $L^{0}$-convex for each $x\in G$, has a fixed point. This is the first fixed point theorem for set-valued mappings in random normed modules, providing a random generalization of the classical Kakutani fixed point theorem as well as a set-valued extension of the noncompact Schauder fixed point theorem established in [Guo et al., Math. Ann. 391(3), 3863--3911 (2025)].
	\end{abstract}
	\maketitle
	\tableofcontents

	\section{Introduction}\label{sec.1}

    \par 
    The celebrated Kakutani fixed point theorem \cite{Kakutani1941} states that any upper semicontinuous set-valued mapping from a compact convex subset of the $n$-dimensional Euclidean space $\mathbb{R}^{n}$ into itself, with closed convex values, has a fixed point. This theorem extends the Brouwer fixed point theorem \cite{Brouwer1912} from continuous single-valued mappings to upper semicontinuous set-valued mappings, and has subsequently been generalized to Banach spaces by Bohnenblust and Karlin \cite{BK1950}, and to locally convex spaces by Glicksberg \cite{Gli1952} and Fan \cite{Fan1952}. The Kakutani fixed point theorem and its generalizations have become powerful tools in game theory \cite{Aub1979,Bor1985,Deb1959,Nash1951',SV1991}, optimal control theory \cite{LO1955}, differential equations \cite{Cel1970} and various other areas of mathematics \cite{GD2003}, for example, Nash \cite{Nash1951'} provided a concise one-page proof of his equilibrium theorem by reformulating the problem of the existence of equilibrium points as the problem of the existence of Kakutani fixed points.

    \par 
    Recently, Guo et al. \cite{GWXYC2025} established a noncompact Schauder fixed point theorem and applied it to prove a dynamic Nash equilibrium theorem, while Ponosov \cite{Ponosov1988,Ponosov2021,Ponosov2022} proved a stochastic version of the Brouwer fixed point theorem and employed it to the study of stochastic differential equations. Notably, \cite[Theorem 5.4]{TMGC2025} shows that this stochastic version of the Brouwer fixed point theorem is equivalent to a special case of the random Brouwer fixed point theorem established in \cite{GWXYC2025}, where the latter is naturally a special case of the noncompact Schauder fixed point theorem. With the aim of providing a new tool for the development of random functional analysis \cite{Guo2010}, dynamic Nash equilibrium theory \cite{GWXYC2025} and stochastic differential equations \cite{Ponosov2021,Ponosov2022}, this paper proves the random Kakutani fixed point theorem in random normed modules, a result that not only extends the noncompact Schauder fixed point theorem \cite{GWXYC2025} from continuous single-valued mappings to upper semicontinuous set-valued mappings but also generalizes the classical result of Bohnenblust and Karlin \cite{BK1950} from Banach spaces to random normed modules. Moreover, it is the first fixed point theorem for set-valued mappings in random normed modules and provides a basic result for future developments in fixed point theory in random functional analysis.

    \par 
    Random normed modules, a central framework of random functional analysis, were independently introduced by Guo in connection with the idea of randomizing classical space theory \cite{Guo1992,Guo1993} and by Gigli in connection with nonsmooth differential geometry on metric measure spaces \cite{Gigli2018} (see also \cite{BPS2023,CLPV2025,CGP2025,GLP2025,LP2019,LPV2024}  for related advances). Different from the situation in classical normed spaces, the $L^{0}$-norm of a random normed module induces two different topologies. One is the $(\varepsilon,\lambda)$-topology, which is a typical metrizable locally nonconvex linear topology. The other is the locally $L^{0}$-convex topology \cite{FKV2009}, which is stronger than the $(\varepsilon,\lambda)$-topology but generally not linear. The two topologies both have their respective advantages and disadvantages in theoretical investigations and financial applications.  To combine their strengths,  Guo \cite{Guo2010} introduced the notion of a $\sigma$-stable set and established the connection between some basic results derived from two topologies. This work considerably advanced the development of random functional analysis and its applications \cite{Guo2024,GWXYC2025,GWYZ2020,GZZ2014,GZWG2020,GZWYZ2017}. Over the past decade, one of the central topics in random functional analysis has been to overcome the challenge due to noncompactness: closed $L^{0}$-convex subsets of a random normed module --- which frequently arise in both theory and financial applications --- are generally not compact under the $(\varepsilon,\lambda)$-topology \cite{Guo2008}, and hence also not compact under the locally $L^{0}$-convex topology. Consequently, classical compactness arguments are no longer applicable. Motivated by the studies on $\sigma$-stability \cite{GMT2024,GWYZ2020} and the randomized version of the Bolzano-Weierstrass theorem \cite{KS2001}, Guo et al. \cite{GWXYC2025} introduced and systematically studied the notion of random sequential compactness, which is a genuine generalization of classical sequential compactness. Specifically, a sequence in a random sequentially compact set $G$ may not admit any subsequence that converges in the $(\varepsilon,\lambda)$-topology to some point of $G$ but always admits a random subsequence that does. This property has inspired a series of subsequent works on fixed point theorems in random functional analysis \cite{MTG2025,TMG2024,TMGC2025,WG2024}, and, in particular, it allows us to overcome the main challenge in establishing the random Kakutani fixed point theorem in random normed modules.

    \par 
    The success of the proof of the noncompact Schauder fixed point theorem in \cite{GWXYC2025} lies mainly in presenting a proper randomization of the classical Schauder projection method, for which two essential tools were developed. The first is the random Hausdorff theorem \cite[Theorem 3.3]{GWXYC2025}, which states that a $\sigma$-stable set is random sequentially compact if and only if it is complete with respect to the $(\varepsilon,\lambda)$-topology and random totally bounded. The second is the method of introducing random Schauder projections \cite[Lemma 4.8]{GWXYC2025}, allowing the construction of approximating mappings (see \cite[Lemma 4.9]{GWXYC2025} and the proof of \cite[Theorem 2.12]{GWXYC2025} for details). Motivated by these developments, as well as by the classical work of Nikaido \cite{Nikaid1968}, who provided a new proof of the Kakutani fixed point theorem in $\mathbb{R}^{n}$ based on the Schauder projection method, this paper develops an approach to establishing the random Kakutani fixed point theorem in random normed modules. Although our proof is motivated from the ideas of \cite{Nikaid1968} and \cite{GWXYC2025}, we remain to overcome the following three challenges:
    \begin{enumerate}[(1)]
    	\item Random total boundedness is considerably more involved than classical total boundedness. The notion of random total boundedness was introduced by means of the notion of a stably finite set, see \cite{GMT2024} or part (2) of Definition \ref{defn.stably sequentically compactness} of this paper for details. However, a stably finite set is generally neither finite nor even countable. Consequently, the method used in \cite{Nikaid1968} cannot be directly applied. Besides, the complicated stratification structures peculiar to a random normed module must be considered, as shown in the key equation (\ref{eq.Kakutani1}). 
    	\item An upper semicontinuous set-valued mapping presents greater challenges than a continuous single-valued mapping. In the random Kakutani fixed point theorem, directly constructing random Schauder projections based on a stably finite random $\varepsilon$-net, as done in \cite{GWXYC2025}, is not enough to complete the proof. To overcome this difficulty, we first establish Lemma \ref{lemm.projection}, which allows us to carefully select elements from the images of the set-valued mapping on the stably finite random $\varepsilon$-net to construct continuous single-valued mappings. By applying the noncompact Schauder fixed point theorem to these mappings, we obtain fixed points that in turn generate a stable sequence compatible with the upper semicontinuous set-valued mapping.
    	\item Topological arguments in our proof are different from those in \cite[Theorem 2.12]{GWXYC2025}. Random sequential compactness is defined with respect to the $(\varepsilon,\lambda)$-topology, and the proof of the noncompact Schauder fixed point theorem \cite[Theorem 2.12]{GWXYC2025} relies primarily on arguments under this topology. In contrast, our proof of the random Kakutani fixed point theorem is carried out mainly under the locally $L^{0}$-convex topology, which requires the notion of stably sequential compactness \cite{GMT2024} formulated in this topology. Consequently, our proof depends on \cite[Theorem 2.21]{GMT2024}, which establishes the equivalence between a $\sigma$-stable random sequentially compact set and a stably sequentially compact set. 
    \end{enumerate}

    \par The remainder of this paper is organized as follows. Section \ref{sec.2} introduces some preliminaries and further presents the main result --- the random Kakutani fixed point theorem in random normed modules (namely, Theorem \ref{thm.Kakutani}). Section \ref{sec.3} is devoted to the proof of Theorem \ref{thm.Kakutani}. Finally, Section \ref{sec.4} concludes this paper with some remarks and two open problems related to the present study.

    \section{Preliminaries and main result}\label{sec.2}

    \par 
    Throughout this paper, $\mathbb{K}$ denotes the scalar field $\mathbb{R}$ of real numbers or $\mathbb{C}$ of complex numbers, $\mathbb{N}$ the set of positive integers, $(\Omega,\mathcal{F},P)$ a given probability space, $L^0(\mathcal{F},\mathbb{K})$ the algebra of equivalence classes of $\mathbb{K}$-valued random variables on $(\Omega,\mathcal{F},P)$, $L^0(\mathcal{F},\mathbb{N})$ the set of equivalence classes of $\mathbb{N}$-valued random variables on $(\Omega,\mathcal{F},P)$, $L^0(\mathcal{F}):=L^0(\mathcal{F},\mathbb{R})$ and $\bar{L}^0(\mathcal{F})$ the set of equivalence classes of extended real-valued random variables on $(\Omega,\mathcal{F},P)$.

    \par  
    For any $A, B\in \mathcal{F}$,  we will always use the corresponding lowercase letters $a$ and $b$  for the equivalence classes $[A]$ and $[B]$ (two elements $C$ and $D$ in $\mathcal{F}$ are said to be equivalent if $P[(C\setminus D)\cup(D\setminus C)]=0$), respectively. Let $B_{\mathcal{F}}=\{a=[A]:A\in \mathcal{F}\}$, $1=[\Omega]$, $0=[\emptyset]$, $a\wedge b=[A\cap B]$, $a\vee b=[A\cup B] $ and $a^{c}=[A^{c}]$, where $A^c$ denotes the complement of $A$, then $(B_{\mathcal{F}},\wedge,\vee,^{c},0,1)$ is a complete Boolean algebra, namely, a complete complemented distributive lattice (see \cite{Kop1989} for details). Specifically, $B_{\mathcal{F}}$ is called the measure algebra associated with $(\Omega,\mathcal{F},P)$.

    \par 
    It is well known from \cite{DS1958} that $\bar{L}^0(\mathcal{F})$ is a complete lattice under the partial order $\xi \leq \eta$ iff $\xi^0(\omega) \leq \eta^0(\omega)$ for almost all $\omega \in \Omega$ (briefly, $\xi^0(\omega) \leq \eta^0(\omega)$ a.s.), where $\xi^0$ and $\eta^0$ are arbitrarily chosen representatives of $\xi$ and $\eta$ in $\bar{L}^0(\mathcal{F})$, respectively. In particular, the sublattice $(L^0(\mathcal{F}),\leq)$ is a Dedekind complete lattice.

    \par 
    As usual, for $\xi,\eta\in\bar{L}^{0}(\mathcal{F})$, $\xi< \eta$  means $\xi\leq \eta$ and $\xi\neq \eta$, whereas, for any $a\in B_{\mathcal{F}}$, $\xi< \eta$ on $a$ means $\xi^{0}(\omega)< \eta^{0}(\omega)$  for almost all $\omega\in A$, where $A$, $\xi^0$ and $\eta^{0}$ are arbitrarily chosen representatives of $a$, $\xi$ and $\eta$, respectively. Moreover, we denote $L_{+}^{0}(\mathcal{F}):=\{\xi\in L^{0}(\mathcal{F}):\xi\geq 0\}$ and $L_{++}^{0}(\mathcal{F}):=\{\xi\in L^{0}(\mathcal{F}):\xi>0~\text{on}~1\}$.

    \par For any $\xi,\eta\in \bar{L}^{0}(\mathcal{F})$, we use $(\xi=\eta)$ for the equivalence class of $\{\omega\in \Omega:\xi^{0}(\omega)=\eta^{0}(\omega)\}$,
    where $\xi^{0}$ and $\eta^{0}$ are arbitrarily chosen representatives of $\xi$ and $\eta$, respectively. Similarly, one can understand $(\xi<\eta)$ and $(\xi\leq\eta)$.

    \par 
    Definition \ref{defn. RN module} below is adopted from \cite{Guo1992,Guo1993} by following the traditional nomenclature of random metric spaces and random normed spaces (see \cite[Chapters 9 and 15]{SS2005}).

    \begin{definition}[\cite{Guo1992,Guo1993}]\label{defn. RN module}
    An ordered pair $(E, \|\cdot\|)$ is called a random normed module (briefly, an $RN$ module) over $\mathbb{K}$ with base $(\Omega, \mathcal{F}, P)$ if $E$ is a left  module over the algebra $L^{0}(\mathcal{F},\mathbb{K})$ (briefly, an $L^{0}(\mathcal{F},\mathbb{K})$-module) and $\|\cdot\|$ is a mapping from $E$ to  $L^0_+(\mathcal{F})$ such that the following conditions are satisfied:
    	\begin{enumerate} [(1)]
    		\item $\|\xi x\|= |\xi| \|x\|$ for any $\xi \in L^0(\mathcal{F}, \mathbb{K})$ and any $x\in E$;
    		\item $\|x+y\|\leq \|x\|+ \|y\|$ for any $x$ and $y$ in $E$;
    		\item $\|x\|=0$ implies $x=\theta$ (the null element of $E$). 
    	\end{enumerate}
    As usual, $\|\cdot\|$ is called the $L^0$-norm on $E$.
    \end{definition}

    \par  It should be mentioned that the notion of an $L^0$-normed $L^0$-module, which is equivalent to that of an $RN$ module, was independently introduced by Gigli  in \cite{Gigli2018} for the study of nonsmooth differential geometry on metric measure spaces, where the $L^0$-norm was called the pointwise norm.

    \par 
    When $(\Omega,\mathcal{F},P)$ is trivial, namely, $\mathcal{F}=\{\emptyset,\Omega\}$, an $RN$ module $(E,\|\cdot\|)$ over $\mathbb{K}$ with base $(\Omega,\mathcal{F},P)$ reduces to an ordinary normed space over $\mathbb{K}$. The simplest nontrivial $RN$ module is $(L^{0}(\mathcal{F},\mathbb{K}),|\cdot|)$,  where $|\cdot|$ is the absolute value mapping.

    \par 
    For an $RN$ module $(E,\|\cdot\|)$, the $L^{0}$-norm $\|\cdot\|$ can induce two topologies. The first topology is the $(\varepsilon,\lambda)$-topology, whose definition originates from Schweizer and Sklar’s work on probabilistic metric spaces \cite{SS2005}.

    \begin{definition}[\cite{Guo2010}]\label{defn.topology1}
    Let $(E,\|\cdot\|)$ be an $RN$ module over $\mathbb{K}$ with base $(\Omega,\mathcal{F},P)$. For any real numbers $\varepsilon$ and $\lambda$ with $\varepsilon>0$ and $0< \lambda <1$, let $N_{\theta}(\varepsilon, \lambda)=\{x\in E: P\{\omega\in \Omega:\|x\|(\omega)<\varepsilon\}>1-\lambda\}$, then  $\mathcal{U}_{\theta}:=\{N_{\theta}(\varepsilon, \lambda): \varepsilon>0,0<\lambda <1\}$ forms a local base of some metrizable linear topology for $E$,  called the $(\varepsilon, \lambda)$-topology, denoted by $\mathcal{T}_{\varepsilon, \lambda}$.	
    \end{definition}

    \par 
    The $(\varepsilon,\lambda)$-topology is an abstract generalization of the topology of convergence in probability. More precisely, a sequence $\{x_{n}, n \in \mathbb{N}\}$ in an $RN$ module converges to $x$ in this topology if and only if $\{\|x_{n} -x\|, n \in \mathbb{N}\}$ converges to $0$ in probability. This topology is natural and convenient, for example, the development of nonsmooth differential geometry on metric measure spaces is often carried out under it \cite{BPS2023,CLPV2025,CGP2025,GLP2025,LP2019,LPV2024}. Moreover, $(E,\mathcal{T}_{\varepsilon,\lambda})$ is a metrizable topological module over the topological algebra $(L^{0}(\mathcal{F},\mathbb{K}),\mathcal{T}_{\varepsilon,\lambda})$ \cite{Guo2010}.

    \par 
    The second topology is the locally $L^{0}$-convex topology, which can ensure most of the $L^{0}$-convex sets in question to have nonempty interiors and make it possible to establish the continuity and subdifferentiability theorems for $L^{0}$-convex functions, see \cite{FKV2009,GZWYZ2017} for details.

    \begin{definition}[\cite{FKV2009}]\label{defn.topology2}
    Let $(E,\|\cdot\|)$ be an $RN$ module over $\mathbb{K}$ with base $(\Omega,\mathcal{F},P)$. For any $x\in E$ and any $r\in L^{0}_{++}(\mathcal{F})$, let $B(x,r)=\{y\in E:\|y-x\|<r~\text{on}~1\}$, then $\{B(x,r):x\in E, r\in L_{++}^{0}(\mathcal{F})\}$ forms a base for some Hausdorff topology on $E$, called the locally $L^{0}$-convex topology induced by $\|\cdot\|$, denoted by $\mathcal{T}_{c}$.
    \end{definition}

    \par 
    Recall that a nonempty subset $G$ of an $RN$ module $(E,\|\cdot\|)$ is said to be $L^{0}$-convex if $\xi x+(1-\xi)y\in G$ for any $x,y\in G$ and any $\xi\in L_{+}^{0}(\mathcal{F})$ with $0\leq\xi\leq 1$; $L^{0}$-absorbent if for any $x\in E$ there exists $\delta\in L_{++}^{0}(\mathcal{F})$ such that $\lambda x\in G$ for any $\lambda\in L^{0}(\mathcal{F},\mathbb{K})$ with $|\lambda|\leq \delta$; $L^{0}$-balanced if $\lambda x\in G$ for any $x\in G$ and any $\lambda\in L^{0}(\mathcal{F},\mathbb{K})$ with $|\lambda|\leq 1$. It is clear that $B(\theta,r):=\{y\in E:\|y\|<r~\text{on}~1\}$ is $L^{0}$-convex, $L^{0}$-absorbent and $L^{0}$-balanced for any $r\in L_{++}^{0}(\mathcal{F})$. It is well known that $\{B(\theta,r): r\in L_{++}^{0}(\mathcal{F})\}$ forms a local base for the locally $L^{0}$-convex topology $\mathcal{T}_{c}$ on $E$.

    \par 
    For an $RN$ module $(E,\|\cdot\|)$,  $\mathcal{T}_{c}$ is stronger than $\mathcal{T}_{\varepsilon,\lambda}$, but $\mathcal{T}_{c}$ is not a linear topology in general since scalar multiplication is not necessarily continuous under $\mathcal{T}_{c}$. Consequently, $(E,\mathcal{T}_{c})$ is only a topological module over the topological ring $(L^{0}(\mathcal{F},\mathbb{K}),\mathcal{T}_{c})$, see \cite{FKV2009,Guo2010} for details.

    \par  
    For clarity and convenience in presenting the key notion of $\sigma$-stable sets, we first recall some basic notions concerning measure algebras and regular $L^{0}(\mathcal{F},\mathbb{K})$-modules.

    \par 
    From now on, for any $a\in B_{\mathcal{F}}$, we always use $I_{a}$ to denote the equivalence class of $I_{A}$, where $A$ is an arbitrarily chosen representative of $a$ and  $I_{A}$ denotes the characteristic function of $A$ (namely, $I_{A}(\omega)=1$ if $\omega\in A$ and $I_{A}(\omega)=0$ otherwise). For a subset $H$ of a complete lattice (e.g., $B_{\mathcal{F}}$), we use $\bigvee H$ and $\bigwedge H$ to denote the supremum and infimum of $H$, respectively.

    \par 
    As usual, for any $a,b\in B_{\mathcal{F}}$, $a>b$ means $a\geq b$ and $a\neq b$. A subset $\{a_{i}:i\in I\}$ of $B_{\mathcal{F}}$ is called a partition of unity if $\bigvee_{i\in I} a_{i} =1$ and $a_{i} \wedge a_{j} = 0$ for any $i,j\in I$ with $i\neq j$. The collection of all such partitions is denoted by $p(1)$. For any $\{a_{j}:j\in J\}\in p(1)$, it is clear that the set $\{j\in J : a_{j}>0\}$ is at most countable.

    \par 
    An  $L^{0}(\mathcal{F},\mathbb{K})$-module $E$ is said to be regular if $E$ has the following property: for any given two elements $x$ and $y$ in $E$, if there exists $\{a_{n},n\in \mathbb{N}\}\in p(1)$ such that $I_{a_{n}}x=I_{a_{n}}y$ for each $n\in \mathbb{N}$, then $x=y$.

    \par 
    In the remainder of this paper, all the $L^{0}(\mathcal{F},\mathbb{K})$-modules under consideration are assumed to be regular. This restriction is not excessive, since all random normed modules are regular.

    \begin{definition}[\cite{Guo2010}]\label{defn.stable set}
    Let $E$ be an $L^0(\mathcal{F},\mathbb{K})$-module and $G$ be a nonempty subset of $E$. $G$ is said to be finitely stable if $I_{a}x+I_{a^{c}}y\in G$ for any  $x,y\in G$ and any $a\in B_{\mathcal{F}}$. $G$ is said to be $\sigma$-stable (or to have the countable concatenation property in the original terminology of  \cite{Guo2010}) if for each sequence $\{x_{n}, n\in \mathbb{N} \}$ in $G$ and each $\{a_{n},n\in\mathbb{N}\} \in p(1)$, there exists some $x\in G$ such that  $I_{a_{n}}x=I_{a_{n}}x_{n}$ for each $n\in \mathbb{N}$ ($x$ is unique since $E$ is assumed to be regular, usually denoted by $\sum_{n=1}^{\infty}I_{a_{n}}x_{n}$,  called the countable concatenation of $\{x_n,n\in\mathbb{N}\}$ along $\{a_n,n\in \mathbb{N}\}$). By the way, if $G$ is $\sigma$-stable and  $H$ is a nonempty subset of $G$, then $\sigma(H):=\{\sum_{n=1}^{\infty}I_{a_{n}}h_{n}: \{h_{n},n\in \mathbb{N}\}$ is a sequence in $H$ and $\{a_{n},n\in \mathbb{N}\}\in p(1)$\} is called  the $\sigma$-stable hull of $H$.
    \end{definition}

   \par 
   It is clear that $L^{0}(\mathcal{F})$ is $\sigma$-stable. In particular, $L^{0}(\mathcal{F},\mathbb{N})$ is a $\sigma$-stable directed set as a subset of $(L^{0}(\mathcal{F}),\leq)$.

   \par 
   Now, the connection between some basic results derived from the two topologies $\mathcal{T}_{\varepsilon,\lambda}$ and $\mathcal{T}_{c}$ can be summarized in Proposition \ref{prop.stable} below.

    \begin{proposition}[\cite{Guo2010}]\label{prop.stable}
    Let $(E,\|\cdot\|)$ be an $RN$ module over $\mathbb{K}$ with base $(\Omega,\mathcal{F},P)$ and $G$ be a nonempty subset of $E$. Then we have the following:
    \begin{enumerate}[(1)]
    \item If $G$ is $\sigma$-stable, then $G_{\varepsilon,\lambda}^{-}=G_{c}^{-}$, where $G_{\varepsilon,\lambda}^{-}$ and $G_{c}^{-}$  denote the closure of $G$ under $\mathcal{T}_{\varepsilon,\lambda}$ and $\mathcal{T}_{c}$, respectively.
    \item If $G$ is finitely stable, then $G$ is $\mathcal{T}_{\varepsilon,\lambda}$-complete iff $G$ is both $\sigma$-stable and $\mathcal{T}_{c}$-complete.  
    \end{enumerate}	
    \end{proposition}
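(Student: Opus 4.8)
For the statement \ref{prop.stable}, I would establish the two assertions separately, using throughout that $\mathcal{T}_{c}$ is finer than $\mathcal{T}_{\varepsilon,\lambda}$, that $(E,\mathcal{T}_{\varepsilon,\lambda})$ is metrizable with $\|\cdot\|$ continuous on it, and that a sequence converges in $\mathcal{T}_{\varepsilon,\lambda}$ precisely when its $L^{0}$-norm tends to $0$ in probability, so one may always pass to an almost surely convergent subsequence. For part (1), the inclusion $G_{c}^{-}\subseteq G_{\varepsilon,\lambda}^{-}$ is immediate, since $G_{\varepsilon,\lambda}^{-}$ is $\mathcal{T}_{\varepsilon,\lambda}$-closed, hence $\mathcal{T}_{c}$-closed, and contains $G$. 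For the reverse inclusion I would take $x\in G_{\varepsilon,\lambda}^{-}$ and a sequence $\{x_{n}\}$ in $G$ with $\|x_{n}-x\|\to 0$ almost surely (after passing to a subsequence), then fix $r\in L_{++}^{0}(\mathcal{F})$: putting $e_{k}:=\bigwedge_{j\geq k}(\|x_{j}-x\|<r)$, the $e_{k}$ increase in $B_{\mathcal{F}}$ with $\bigvee_{k}e_{k}=1$ (because $r>0$ on $1$), so $b_{1}:=e_{1}$, $b_{k}:=e_{k}\wedge e_{k-1}^{c}$ form a partition of unity, and $\sigma$-stability yields $y:=\sum_{k}I_{b_{k}}x_{k}\in G$ with $\|y-x\|=\|x_{k}-x\|<r$ on each $b_{k}$, i.e. $y\in B(x,r)\cap G$; as $r$ is arbitrary, $x\in G_{c}^{-}$.

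For the forward direction of part (2), assume $G$ is finitely stable and $\mathcal{T}_{\varepsilon,\lambda}$-complete. To obtain $\sigma$-stability, given $\{x_{n}\}$ in $G$ and $\{a_{n}\}\in p(1)$, I would form the partial concatenations $y_{N}:=\sum_{n\leq N}I_{a_{n}}x_{n}+I_{c_{N}^{c}}x_{1}\in G$ with $c_{N}=\bigvee_{n\leq N}a_{n}$ (finite stability); since $y_{M}-y_{N}$ is supported on $c_{N}^{c}$ and $P(c_{N}^{c})\to 0$, the sequence $\{y_{N}\}$ is $\mathcal{T}_{\varepsilon,\lambda}$-Cauchy, its limit $y$ lies in $G$, and continuity of multiplication by $I_{a_{n}}$ forces $I_{a_{n}}y=I_{a_{n}}x_{n}$, i.e. $y=\sum_{n}I_{a_{n}}x_{n}\in G$. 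For $\mathcal{T}_{c}$-completeness, a $\mathcal{T}_{c}$-Cauchy net in $G$ is $\mathcal{T}_{\varepsilon,\lambda}$-Cauchy, hence $\mathcal{T}_{\varepsilon,\lambda}$-converges to some $x\in G$ by metrizability and completeness; then for each $r\in L_{++}^{0}(\mathcal{F})$ one picks $\alpha_{0}$ with $\|x_{\alpha}-x_{\beta}\|<r$ on $1$ for $\alpha,\beta\geq\alpha_{0}$ and, letting $\beta$ tend to the limit, deduces from continuity of $\|\cdot\|$ that $\|x_{\alpha}-x\|\leq r$ on $1$ for $\alpha\geq\alpha_{0}$; applying this with $r/2$ in place of $r$ shows $x_{\alpha}\to x$ in $\mathcal{T}_{c}$.

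The converse direction of part (2) is the step I expect to be the main obstacle, since a $\mathcal{T}_{\varepsilon,\lambda}$-Cauchy sequence carries no $\mathcal{T}_{c}$-Cauchy information a priori, and $\sigma$-stability has to be exploited to manufacture it through a random reindexing. Assuming $G$ finitely stable, $\sigma$-stable and $\mathcal{T}_{c}$-complete, and given a $\mathcal{T}_{\varepsilon,\lambda}$-Cauchy sequence $\{x_{n}\}$ in $G$, I would first extract a subsequence $\{x_{n_{k}}\}$ with $P(\|x_{n_{k+1}}-x_{n_{k}}\|\geq 2^{-k})<2^{-k}$, so that by Borel--Cantelli the series $\sum_{k}\|x_{n_{k+1}}-x_{n_{k}}\|$ converges almost surely and its tails $R_{k}:=\sum_{i\geq k}\|x_{n_{i+1}}-x_{n_{i}}\|$ decrease to $0$ almost surely. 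For each $r\in L_{++}^{0}(\mathcal{F})$ let $\mu_{r}\in L^{0}(\mathcal{F},\mathbb{N})$ be the first index with $R_{\mu_{r}}<r$ and set $w_{r}:=\sum_{m}I_{(\mu_{r}=m)}x_{n_{m}}\in G$ (using $\sigma$-stability). Monotonicity of $\{R_{k}\}$ gives, on the one hand, $\|x_{n_{k}}-w_{r}\|<R_{k}+r$ on $1$ for every $k$ and every $r$, and on the other hand $\|w_{r}-w_{r\wedge r'}\|<r$ on $1$ for all $r,r'$ (since $\mu_{r\wedge r'}\geq\mu_{r}$ forces $\|w_{r\wedge r'}-w_{r}\|\leq R_{\mu_{r}}<r$ on $(\mu_{r}=m)$), so $\{w_{r}\}_{r\in L_{++}^{0}(\mathcal{F})}$ is a $\mathcal{T}_{c}$-Cauchy net in $G$; $\mathcal{T}_{c}$-completeness then provides $z\in G$ with $w_{r}\to z$ in $\mathcal{T}_{c}$. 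Letting $r\to\theta$ in the first estimate yields $\|x_{n_{k}}-z\|\leq R_{k}$ on $1$, hence $x_{n_{k}}\to z$ in $\mathcal{T}_{\varepsilon,\lambda}$, and since $\{x_{n}\}$ is $\mathcal{T}_{\varepsilon,\lambda}$-Cauchy it also converges to $z\in G$; thus $G$ is $\mathcal{T}_{\varepsilon,\lambda}$-complete, which completes the proof.
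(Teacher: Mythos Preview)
The paper does not supply its own proof of Proposition~\ref{prop.stable}; it is quoted as a known result from \cite{Guo2010} and used only as a preliminary. So there is no argument in the present paper to compare against.

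That said, your proposal stands on its own as a correct proof. Part~(1) is the standard $\sigma$-stability trick: given an a.s.\ convergent approximating sequence and a target radius $r\in L^0_{++}(\mathcal{F})$, you patch the $x_k$ together along the partition coming from the increasing events $e_k=\bigwedge_{j\ge k}(\|x_j-x\|<r)$ to land inside $B(x,r)\cap G$; this is exactly the mechanism behind results of this type in \cite{Guo2010}. In part~(2), your forward direction is routine and correct: finite stability plus $\mathcal{T}_{\varepsilon,\lambda}$-completeness gives $\sigma$-stability via the partial concatenations $y_N$, and $\mathcal{T}_c$-completeness follows because a $\mathcal{T}_c$-Cauchy net is automatically $\mathcal{T}_{\varepsilon,\lambda}$-Cauchy, hence convergent in $G$, and the $\mathcal{T}_c$-Cauchy estimate passes to the limit. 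Your converse direction is also correct and carries the real content: from a $\mathcal{T}_{\varepsilon,\lambda}$-Cauchy sequence you extract a fast subsequence with a.s.\ summable increments, define the random first-entry times $\mu_r$ into $\{R_k<r\}$, and use $\sigma$-stability to form $w_r=\sum_m I_{(\mu_r=m)}x_{n_m}$. The monotonicity of the tails $R_k$ gives both the $\mathcal{T}_c$-Cauchy property of $\{w_r\}$ and the estimate $\|x_{n_k}-z\|\le R_k$ after passing to the $\mathcal{T}_c$-limit $z$, which closes the argument.

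One small point worth making explicit for the reader: in the forward step on $\mathcal{T}_c$-completeness you invoke convergence of a $\mathcal{T}_{\varepsilon,\lambda}$-Cauchy \emph{net} in the complete metrizable space $(G,\mathcal{T}_{\varepsilon,\lambda})$; this is of course true, but since $\mathcal{T}_{\varepsilon,\lambda}$-completeness is usually stated sequentially, a one-line remark (extract a cofinal Cauchy sequence, then bootstrap) would remove any doubt.
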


   \begin{remark}\label{rmk.same}
   	By Proposition \ref{prop.stable}, a $\sigma$-stable subset $G$ of an $RN$ module has the same closedness (resp., completeness) under  $\mathcal{T}_{\varepsilon,\lambda}$ and $\mathcal{T}_{c}$. Hence, in the remainder of this paper, whenever $G$ is $\sigma$-stable, we will simply refer to $G$ as closed (resp., complete) without specifying the topology.
   \end{remark}

   \par 
   Motivated by the randomized version of the Bolzano–Weierstrass theorem \cite{KS2001}, Guo et al. introduced and systematically studied the notion of random sequential compactness in $RN$ modules in \cite{GWXYC2025}.

   \begin{definition}[\cite{GWXYC2025}]\label{defn.random sequentially compact}
   	Let $(E, \|\cdot\|)$ be an $RN$ module over $\mathbb{K}$ with base $(\Omega, \mathcal{F}, P)$ and $G$ a nonempty subset such that $G$ is contained in a $\sigma$-stable subset $H$ of $E$. Given a sequence $\{x_{n}, n\in \mathbb{N} \}$ in $G$, a sequence $\{y_{k}, k\in \mathbb{N} \}$ in $H$ is called a random subsequence of $\{x_{n}, n\in \mathbb{N} \}$ if there exists a sequence $\{n_{k}, k\in \mathbb{N} \}$ in $L^{0}(\mathcal{F},\mathbb{N})$ such that the following two conditions are satisfied:
   	\begin{enumerate} [(1)]
   		\item $n_{k}<n_{k+1}$ on $1$ for any $k\in \mathbb{N}$;
   		\item $y_{k}=x_{n_{k}}:= \sum^{\infty}_{l=1} I_{(n_{k}=l)} x_{l}$ for each $k\in \mathbb{N}$.
   	\end{enumerate}
   	Further, $G$ is said to be random sequentially compact if there exists a random subsequence $\{y_{k}, k\in \mathbb{N}\}$ of $\{x_{n}, n\in \mathbb{N} \}$ for any sequence $\{x_{n}, n\in \mathbb{N} \}$ in $G$ such that $\{y_{k}, k\in \mathbb{N}\}$ converges in $\mathcal{T}_{\varepsilon,\lambda}$ to some element in $G$.
   \end{definition}

   \par 
   A careful reader will find that we require $n_{k}$ to be a positive integer-valued measurable function in the definition of a random subsequence in \cite{GWXYC2025} instead of an element in $L^0(\mathcal{F},\mathbb{N})$ in Definition \ref{defn.random sequentially compact}, but it is easy to check that the two formulations are essentially equivalent!

   \par 
   Let $X$ and $Y$ be two nonempty sets and $F:X\rightarrow 2^{Y}\setminus \{\emptyset\}$ be a set-valued mapping. For any nonempty sets $G\subseteq X$ and $M\subseteq Y$, the image of $G$ under $F$ is defined by 
   $$F(G)=\bigcup_{x\in G}F(x),$$
   and the upper inverse of $M$ is defined by
   $$F^{+}(M)=\{x\in X:F(x)\subseteq M\}.$$

    \par 
    Let $G$ be a $\sigma$-stable subset of an $L^{0}(\mathcal{F},\mathbb{K})$-module. For any sequence of nonempty subsets $\{G_{n}, n\in \mathbb{N}\}$ of $G$ and any $\{a_{n}, n \in\mathbb{N}\}\in p(1)$, 
    $$\sum^{\infty}_{n=1} I_{a_{n}} G_{n}:=\{\sum^{\infty}_{n=1}I_{a_{n}} x_{n}: x_{n}\in G_{n}, \forall n\in\mathbb{N}\}$$ 
    is called the countable concatenation of $\{G_{n},n\in\mathbb{N}\}$ along $\{a_n,n\in \mathbb{N}\}$.

    \begin{definition}\label{defn.rusc}
    	Let $(E_{1},\|\cdot\|_{1})$ and $(E_{2},\|\cdot\|_{2})$ be two $RN$ modules over $\mathbb{K}$ with base $(\Omega,\mathcal{F},P)$,
    	$X\subseteq E_{1}, Y\subseteq E_{2}$  two nonempty sets and  $F:X\rightarrow 2^{Y}\setminus \{\emptyset\}$ a set-valued mapping.
    	$F$ is said to be  
    	\begin{enumerate}[(1)]
    		\item $\sigma$-stable if both $X$ and $F(X)$ are $\sigma$-stable and 
    		$$F(\sum_{n=1}^{\infty}I_{a_{n}}x_{n})=\sum_{n=1}^{\infty}I_{a_{n}}F(x_{n})$$
    		for any sequence $\{x_{n},n\in \mathbb{N}\}$ in $X$ and any $\{a_{n},n\in \mathbb{N}\}\in p(1)$.
    		\item $\mathcal{T}_{c}$-upper semicontinuous at $x_{0}\in X$ if for every $\mathcal{T}_{c}$-neighborhood $U$ of $F(x_{0})$, there is a $\mathcal{T}_{c}$-neighborhood $V$ of $x_{0}$ such that  $F(V\cap X)\subseteq U$ (equivalently, the upper inverse $F^{+}(U)$ is a $\mathcal{T}_{c}$-neighborhood of $x_{0}$ in $X$). Furthermore,  $F$ is said to be $\mathcal{T}_{c}$-upper semicontinuous on $X$ if it is $\mathcal{T}_{c}$-upper semicontinuous at every point $x\in X$.
    	\end{enumerate}
    \end{definition}

    \par 
    A point $x_{0}\in X$ is said to be a fixed point of the set-valued mapping $F:X\rightarrow 2^{X}$ if $x_{0}\in F(x_{0})$.

    \par 
    Now we can give the main result of this paper.

    \begin{theorem}\label{thm.Kakutani}
    Let $(E,\|\cdot\|)$ be an $RN$ module over $\mathbb{K}$ with base $(\Omega,\mathcal{F},P)$, $G$ a random sequentially compact $L^{0}$-convex subset of $E$ and $F:G\rightarrow 2^{G}\backslash\{\emptyset\}$ a $\sigma$-stable $\mathcal{T}_{c}$-upper semicontinuous mapping such that $F(x)$ is closed and $L^{0}$-convex for each $x\in G$. Then $F$ has a fixed point.
    \end{theorem}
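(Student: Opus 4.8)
The plan is to follow a randomized Schauder-projection strategy in the spirit of Nikaido's proof of the classical Kakutani theorem, transporting everything into the $\mathcal{T}_c$-setting where $L^0$-convexity behaves well. First I would reduce to a convenient form of $G$: since $G$ is random sequentially compact and $L^0$-convex, and the statement of the theorem does not presuppose $\sigma$-stability of $G$ but $F$ being $\sigma$-stable forces $G=X$ to be $\sigma$-stable, I may invoke Proposition~\ref{prop.stable} and Remark~\ref{rmk.same} so that closedness/completeness of $G$ (and of each $F(x)$) is topology-independent, and then invoke \cite[Theorem 2.21]{GMT2024} to upgrade random sequential compactness to stable sequential compactness under $\mathcal{T}_c$. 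I would also use the random Hausdorff theorem \cite[Theorem 3.3]{GWXYC2025} to get that $G$ is complete and random totally bounded: for every $\varepsilon\in L^0_{++}(\mathcal{F})$ there is a stably finite random $\varepsilon$-net $N_\varepsilon=\{p_i : i\in J\}\subseteq G$, i.e. a finite family indexed on a partition of unity, whose $B(p_i,\varepsilon)$ cover $G$. This is the skeleton on which the approximating maps are built.

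The core construction proceeds by approximation. Fix $\varepsilon\in L^0_{++}(\mathcal{F})$ and a stably finite random $\varepsilon$-net $N_\varepsilon$. Using Lemma~\ref{lemm.projection} (the selection lemma advertised in the introduction), I would, for each net point $p_i$, pick a single element $q_i\in F(p_i)\subseteq G$ (respecting the stratification so that the choices glue along the partition of unity; this is the content of the key equation~(\ref{eq.Kakutani1})). With the random partition-of-unity functions subordinate to the cover $\{B(p_i,\varepsilon)\}$ — these are built from the $L^0$-norm exactly as in \cite[Lemma 4.8]{GWXYC2025} — I define a continuous single-valued map $f_\varepsilon : G\to G$ by $f_\varepsilon(x)=\sum_i \lambda_i(x)\, q_i$, an $L^0$-convex combination landing in the $L^0$-convex hull of $\{q_i\}\subseteq G$, hence in $G$. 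By the noncompact Schauder fixed point theorem \cite[Theorem 2.12]{GWXYC2025}, $f_\varepsilon$ has a fixed point $x_\varepsilon$. The geometry of the partition of unity forces $x_\varepsilon=f_\varepsilon(x_\varepsilon)$ to be an $L^0$-convex combination of points $q_i\in F(p_i)$ with each corresponding $p_i$ within $\varepsilon$ of $x_\varepsilon$; by $L^0$-convexity of the values $F(p_i)$ and a standard $\mathcal{T}_c$-upper semicontinuity estimate, $x_\varepsilon$ is "$\varepsilon$-close" to belonging to $F$ of a nearby point.

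Now I would run this along a decreasing sequence $\varepsilon_n\downarrow 0$ (say $\varepsilon_n = \tfrac1n\,\mathbf{1}$) to produce a sequence $\{x_{\varepsilon_n}\}$ in $G$; by random (equivalently stable) sequential compactness, pass to a random subsequence converging in $\mathcal{T}_{\varepsilon,\lambda}$ — hence, on the $\sigma$-stable set $G$, also in $\mathcal{T}_c$ — to some $x^\ast\in G$. The corresponding net points $p_i$ chosen at stage $n$ converge to $x^\ast$ as well, since they are within $\varepsilon_n$. It remains to show $x^\ast\in F(x^\ast)$: supposing not, closedness and $L^0$-convexity of $F(x^\ast)$ give, via an $L^0$-valued separation/Hahn–Banach argument in $(E,\mathcal{T}_c)$, a continuous $L^0$-linear functional strictly separating $x^\ast$ from $F(x^\ast)$ on a set of positive measure; then $\mathcal{T}_c$-upper semicontinuity of $F$ propagates this separation to $F(p_i)$ for large $n$ on that set, contradicting the fact that $x_{\varepsilon_n}$ is an $L^0$-convex combination of elements of the $F(p_i)$ converging to $x^\ast$. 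The main obstacle, as the introduction flags, is bookkeeping the stratification: the "finite" net $N_\varepsilon$ is only stably finite, so the index set is really a partition of unity, the selections $q_i$ must be made measurably and compatibly across strata, and the upper-semicontinuity and separation estimates must all be carried out locally on pieces $a\in B_{\mathcal{F}}$ and then concatenated — this is where equation~(\ref{eq.Kakutani1}) and Lemma~\ref{lemm.projection} do the real work, and where care is needed to ensure every countable concatenation stays inside $G$ and inside the relevant $F(x)$.
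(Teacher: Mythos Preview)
Your overall architecture --- random total boundedness, selections $q_i\in F(p_i)$ on a stably finite net, Schauder-type partition-of-unity maps, fixed points via the noncompact Schauder theorem, then a limit --- is exactly the paper's. But there is a genuine gap at the passage to the limit. You write that a random subsequence converges in $\mathcal{T}_{\varepsilon,\lambda}$ ``hence, on the $\sigma$-stable set $G$, also in $\mathcal{T}_c$.'' This is false: $\sigma$-stability of $G$ equates the two \emph{closures} (Proposition~\ref{prop.stable}(1)), not the two notions of sequential convergence; $\mathcal{T}_c$ is strictly stronger, and a sequence converging in probability need not converge in $\mathcal{T}_c$ (take $\xi_n=nI_{A_n}$ in $L^0(\mathcal{F})$ with $P(A_n)\to 0$). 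Since $F$ is only assumed $\mathcal{T}_c$-upper semicontinuous, your endgame --- whether by separation or otherwise --- needs the approximate fixed points and the associated net points $p_i$ to enter a $\mathcal{T}_c$-neighborhood of $x^\ast$, and $\mathcal{T}_{\varepsilon,\lambda}$-convergence alone does not deliver this.

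The paper's remedy is precisely what the introduction flags as challenge~(3): it does \emph{not} extract a random subsequence of $\{x_n\}$. Instead it forms the stable sequence $\{x_u : u\in L^0(\mathcal{F},\mathbb{N})\}$ generated by $\{x_n\}$ and invokes Proposition~\ref{prop.stably compact} to extract a \emph{stable} subsequence --- a net indexed by $L^0(\mathcal{F},\mathbb{N})$ --- converging in $\mathcal{T}_c$ to some $x$. With $\mathcal{T}_c$-convergence in hand, the paper shows directly that $x_{\varphi(v)}\in F(x)+B(\theta,\tfrac{1}{2u})$ for all large $v$ (the stratified bookkeeping in~\eqref{eq.Kakutani1}--\eqref{eq.Kakutani2} is what makes this $L^0$-convex-combination computation go through), so $x$ lies in every $\mathcal{T}_c$-neighborhood of the closed set $F(x)$; no Hahn--Banach separation is needed. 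Two smaller clarifications: Lemma~\ref{lemm.projection} is not a selection lemma --- the choices $y_{m,i}^n\in F(x_{m,i}^n)$ are made arbitrarily on each finite piece --- but rather the statement that the resulting partition-of-unity map is well defined, $\sigma$-stable and $\mathcal{T}_c$-continuous; and equation~\eqref{eq.Kakutani1} records, for each net point, the \emph{maximal} element of $B_{\mathcal{F}}$ on which that net point lies within $\tfrac1n$ of the Schauder fixed point $x_m^n$, which is the stratification datum needed to localize the upper-semicontinuity estimate.
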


    \begin{remark}
    \begin{enumerate}[(1)]
    	\item Since $G$ is random sequentially compact and $L^{0}$-convex, by part (6) of \cite[Lemma 3.5]{GWXYC2025} and part (2) of Proposition \ref{prop.stable} $G$ is $\sigma$-stable and $\mathcal{T}_{\varepsilon,\lambda}$-complete, which ensures that the $\sigma$-stability of $F$ is well defined. Moreover, since $F$ is $\sigma$-stable, it is easy to verify that $F(x)$ is $\sigma$-stable for any $x \in X$. Consequently, for any $x \in X$, the closedness of $F(x)$ is understood as in Remark \ref{rmk.same}.
    	\item If $(\Omega,\mathcal{F},P)$ is trivial, namely, $\mathcal{F}=\{\emptyset,\Omega\}$, the $RN$ module $(E,\|\cdot\|)$ reduces to an ordinary normed space, $G$ to a compact convex subset of $E$ and $F$ to an ordinary upper semicontinuous set-valued mapping. Hence, Theorem \ref{thm.Kakutani} generalizes the classical Kakutani fixed point theorem \cite{BK1950,Kakutani1941,Nikaid1968}. 
    	\item When $F$ is single-valued, it reduces to a $\sigma$-stable $\mathcal{T}_{c}$-continuous mapping.  Consequently, by \cite[Lemma 4.3]{GWXYC2025}, Theorem \ref{thm.Kakutani} also generalizes the noncompact Schauder fixed point theorem \cite[Theorem 2.12]{GWXYC2025}.
    \end{enumerate}	
    \end{remark}

    \par 
    We conclude the section by giving an improved version of \cite[Lemma 4.4]{GWXYC2025} (namely, Proposition \ref{prop.converge a.s.3} below), where we impose the additional assumption that the related random sequentially continuous mapping $f$ is $\sigma$-stable. Random Brouwer fixed point theorem \cite[Lemma 4.5 or Lemma 4.6]{GWXYC2025} is a basis for a noncompact Schauder fixed point theorem \cite[Theorem 2.12]{GWXYC2025}, and \cite[Lemma 4.4]{GWXYC2025} plays an essential role in the proof of \cite[Lemma 4.5]{GWXYC2025}. The improved version shows that the random Brouwer fixed point theorem and the noncompact Schauder fixed point theorem established in \cite{GWXYC2025} are both correct since the random sequentially continuous mappings involved in the two theorems are both $\sigma$-stable. Besides, we would like to suggest that the reader refer to our recent work \cite{TMGC2025} for a new complete proof of the random Brouwer fixed point theorem.

    \par 
    Besides the $\sigma$-stability of the random sequentially continuous mapping in \cite[Lemma 4.4]{GWXYC2025} was not assumed, the original proof of \cite[Lemma 4.4]{GWXYC2025} used part (2) of \cite[Lemma 3.5]{GWXYC2025}. Part (2) of \cite[Lemma 3.5]{GWXYC2025} said that, in an $RN$  module $(E,\|\cdot\|)$, if a sequence $\{x_{n},n\in \mathbb{N}\}$ in some $\sigma$-stable subset of $E$ converges in $\mathcal{T}_{\varepsilon,\lambda}$ to $x_{0}\in E$, then any random subsequence $\{x_{n_{k}},k\in \mathbb{N}\}$ of $\{x_{n},n\in \mathbb{N}\}$ converges in $\mathcal{T}_{\varepsilon,\lambda}$ to $x_{0}$. Unfortunately, there exist some counterexamples showing that part (2) of \cite[Lemma 3.5]{GWXYC2025} does not necessarily hold. Fortunately, Lemma \ref{lemm.converge a.s.1} below can be used to give a new proof of the improved version of \cite[Lemma 4.4]{GWXYC2025}.

    \begin{lemma}\label{lemm.converge a.s.1}
    	Let $(E,\|\cdot\|)$ be an $RN$ module over $\mathbb{K}$ with base $(\Omega, \mathcal{F}, P)$ and $\{x_{n},n\in \mathbb{N}\}$ be a sequence in some $\sigma$-stable subset of $E$ such that $\{x_{n},n\in \mathbb{N}\}$ converges a.s. to $x_{0}\in E$, namely, $\{ \|x_{n}-x_{0}\| ,n\in \mathbb{N} \}$ converges a.s. to 0. Then any random subsequence 
    	$\{x_{n_k},k\in \mathbb{N}\}$ of $\{x_{n},n\in \mathbb{N}\}$ converges a.s. to $x_{0}$. 
    \end{lemma}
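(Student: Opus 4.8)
The plan is to reduce the assertion to an elementary almost-sure statement about real sequences by exploiting how the $L^{0}$-norm interacts with countable concatenations. \textbf{Step 1 (the concatenation identity).} Fix $k\in\mathbb{N}$. By the definition of a random subsequence, $\{(n_{k}=l):l\in\mathbb{N}\}\in p(1)$ and $x_{n_{k}}=\sum_{l=1}^{\infty}I_{(n_{k}=l)}x_{l}$ (this element exists because $\{x_{n},n\in\mathbb{N}\}$ lies in a $\sigma$-stable set). Since $x_{0}\cdot\sum_{l}I_{(n_{k}=l)}=x_{0}$, we get $x_{n_{k}}-x_{0}=\sum_{l=1}^{\infty}I_{(n_{k}=l)}(x_{l}-x_{0})$. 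I would then invoke the standard fact that for any partition of unity $\{a_{l}:l\in\mathbb{N}\}$ and any $z=\sum_{l}I_{a_{l}}z_{l}$ one has $\|z\|=\sum_{l}I_{a_{l}}\|z_{l}\|$ — which follows at once from $I_{a_{l}}\|z\|=\|I_{a_{l}}z\|=\|I_{a_{l}}z_{l}\|=I_{a_{l}}\|z_{l}\|$ together with $\sum_{l}I_{a_{l}}=1$ — to conclude
$$\|x_{n_{k}}-x_{0}\|=\sum_{l=1}^{\infty}I_{(n_{k}=l)}\,\|x_{l}-x_{0}\|.$$

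\textbf{Step 2 (passing to representatives).} Choose a measurable representative $N_{k}:\Omega\to\mathbb{N}$ of $n_{k}$ for each $k$ and a measurable representative $\zeta_{l}:\Omega\to[0,\infty)$ of $\|x_{l}-x_{0}\|$ for each $l$. Because the sets $\{N_{k}=l\}$, $l\in\mathbb{N}$, are pairwise disjoint with union $\Omega$, the function $\omega\mapsto\zeta_{N_{k}(\omega)}(\omega)$ is a measurable representative of the right-hand side above, hence of $\|x_{n_{k}}-x_{0}\|$. On the other hand, the hypothesis $n_{k}<n_{k+1}$ on $1$ gives, for each fixed $k$, that $N_{k}(\omega)<N_{k+1}(\omega)$ for a.e. $\omega$; discarding the corresponding countable union of null sets yields a full-measure set on which $N_{1}(\omega)<N_{2}(\omega)<\cdots$, so that $N_{k}(\omega)\to\infty$ as $k\to\infty$. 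Discarding one further null set — the set where $\zeta_{l}(\omega)\not\to 0$ as $l\to\infty$, which is null by the hypothesis that $\{\|x_{n}-x_{0}\|,n\in\mathbb{N}\}$ converges a.s. to $0$ — we are left with a full-measure set $\Omega_{0}$.

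\textbf{Step 3 (pointwise conclusion).} For each $\omega\in\Omega_{0}$ we have $N_{k}(\omega)\to\infty$ while $\zeta_{l}(\omega)\to 0$ as $l\to\infty$, hence $\zeta_{N_{k}(\omega)}(\omega)\to 0$ as $k\to\infty$; that is, $\|x_{n_{k}}-x_{0}\|(\omega)\to 0$ for a.e. $\omega$, which is precisely the statement that $\{x_{n_{k}},k\in\mathbb{N}\}$ converges a.s. to $x_{0}$.

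I do not anticipate a serious obstacle; the only care needed is in justifying the $L^{0}$-norm identity of Step 1 and in the bookkeeping of Step 2, where it is essential that only countably many indices $k$ and $l$ occur, so that all exceptional null sets can be amalgamated into a single null set. It is worth noting that this is exactly why the analogous naive claim for $\mathcal{T}_{\varepsilon,\lambda}$-convergence (part (2) of \cite[Lemma 3.5]{GWXYC2025}) fails whereas the present a.s. version holds: convergence in probability is not a pointwise notion and cannot be localized $\omega$ by $\omega$, while almost sure convergence can.
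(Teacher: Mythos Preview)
Your proof is correct, and the reasoning is clean. It differs from the paper's argument in one genuine respect: the paper never leaves the equivalence-class language, instead characterizing a.s.\ convergence via the Boolean-algebra identity $\bigwedge_{m}\bigvee_{l\geq m}(\|x_{l}-x_{0}\|\geq\varepsilon)=0$ and then estimating $(\|x_{n_{k}}-x_{0}\|\geq\varepsilon)\leq\bigvee_{l\geq k}(\|x_{l}-x_{0}\|\geq\varepsilon)$ using the observation $n_{k}\geq k$; you, by contrast, pass to representatives and reduce the claim to the elementary real-sequence fact that if $\zeta_{l}(\omega)\to 0$ and $N_{k}(\omega)\to\infty$ then $\zeta_{N_{k}(\omega)}(\omega)\to 0$. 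Your route is more transparent and makes the role of countability (and hence why the $\mathcal{T}_{\varepsilon,\lambda}$-analogue fails) especially visible; the paper's route has the advantage of remaining inside the measure-algebra formalism used throughout and avoiding any bookkeeping of null sets. Both arguments share the same starting identity $\|x_{n_{k}}-x_{0}\|=\sum_{l}I_{(n_{k}=l)}\|x_{l}-x_{0}\|$, so the divergence is purely in how the limiting step is handled.
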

    \begin{proof}
    	Let $\varepsilon>0$ be a given real number. Since $n_{k}<n_{k+1}$ on $1$ for any $k\in \mathbb{N}$ implies that $n_{k}\geq k$ for each $k\in \mathbb{K}$, then we have 
    	\begin{align}
    		(\|x_{n_{k}}-x\|\geq \varepsilon)&=(\sum_{l=1}^{\infty}I_{(n_{k}=l)}\|x_{l}-x\|\geq \varepsilon)\nonumber\\
    		&=\bigvee_{l=1}^\infty [(n_{k}=l)\wedge (\|x_{l}-x\|\geq \varepsilon)]\nonumber\\
    		&=\bigvee_{l=k}^\infty [(n_{k}=l)\wedge (\|x_{l}-x\|\geq \varepsilon)]\nonumber\\
    		&\leq \bigvee_{l=k}^\infty (\|x_{l}-x\|\geq \varepsilon)\nonumber
    	\end{align}
    	for each $k\in \mathbb{N}$. Furthermore, since $\{x_{n},n\in \mathbb{N}\}$ converges a.s. to $x_{0}\in E$, then, in the language of measure algebra, we have 
    	$$\bigwedge_{m=1}^{\infty}\bigvee_{l=m}^{\infty}(\|x_{l}-x_{0}\|\geq \varepsilon)=0.$$
        It follows that 
    	\begin{align}
    		\bigwedge_{m=1}^{\infty}\bigvee_{k=m}^{\infty}(\|x_{n_{k}}-x\|\geq \varepsilon)&\leq \bigwedge_{m=1}^{\infty}\bigvee_{k=m}^{\infty}\bigvee_{l=k}^\infty (\|x_{l}-x\|\geq \varepsilon)\nonumber\\
    		&=\bigwedge_{m=1}^{\infty}\bigvee_{l=m}^\infty (\|x_{l}-x\|\geq \varepsilon)\nonumber\\
    		&=0,\nonumber
    	\end{align}
    	implying $\{\|x_{n_{k}}-x\|, k\in \mathbb{N}\}$ converges a.s. to $0$. Thus, $\{x_{n_{k}},k\in \mathbb{N}\}$ converges a.s. to $x_{0}$.
    \end{proof}

    \begin{definition}[\cite{GWXYC2025}]
    	Let $(E_{1}, \|\cdot\|_{1})$ and $(E_{2}, \|\cdot\|_{2})$ be two $RN$ modules over $\mathbb{K}$ with base $(\Omega, \mathcal{F}, P)$, $G_{1}$ and $G_{2}$ two nonempty subsets of $E_{1}$ and $E_{2}$, respectively, and $f$ a mapping from $G_{1}$ to $G_{2}$. $f$ is said to be:
    	\begin{enumerate}[(1)]
    		\item $\sigma$-stable if both $G_{1}$ and $G_{2}$ are $\sigma$-stable and 
    		$$f(\sum^{\infty}_{k=1}I_{a_{k}}x_{k})=\sum^{\infty}_{k=1}I_{a_{k}}f(x_{k})$$ for any $\{a_{k}, k\in\mathbb{N}\}\in p(1)$ and any sequence $\{x_{n}, n\in\mathbb{N}\}$ in $G_{1}$.
    		\item random sequentially continuous at $x_{0}\in G_{1}$ if $G_{1}$ is $\sigma$-stable and if for any sequence $\{x_{n}, n\in\mathbb{N}\}$ in $G_{1}$ convergent in $\mathcal{T}_{\varepsilon,\lambda}$ to $x_{0}$ there exists a random subsequence $\{x_{n_{k}}, k\in\mathbb{N}\}$ of $\{x_{n}, n\in\mathbb{N} \}$ such that $\{f(x_{n_{k}}), k\in \mathbb{N}\}$ converges in $\mathcal{T}_{\varepsilon,\lambda}$ to $f(x_{0})$. Further, $f$ is said to be random sequentially continuous if $f$ is random sequentially continuous at any point in $G_{1}$.
    	\end{enumerate}
    \end{definition}

    \begin{lemma}\label{lemm.converge a.s.2}
    	Let $(E_{1}, \|\cdot\|_{1})$ and $(E_{2}, \|\cdot\|_{2})$ be two $RN$ modules over $\mathbb{K}$ with base $(\Omega, \mathcal{F}, P)$, $G_{1}\subseteq E_{1}$ and $G_{2}\subseteq E_{2}$ two nonempty $\sigma$-stable subsets, and $f:G_{1}\rightarrow G_{2}$ a random sequentially continuous mapping. Then for any sequence $\{x_{n},n\in \mathbb{N}\}$ in $G_{1}$ that converges in $\mathcal{T}_{\varepsilon,\lambda}$ to $x_{0}\in G_{1}$, there exists a random subsequence $\{x_{n_{k}},k\in \mathbb{N}\}$ of $\{x_{n},n\in \mathbb{N}\}$ such that $\{x_{n_{k}},k\in \mathbb{N}\}$ converges a.s. to $x_{0}$ and $\{f(x_{n_{k}}),k\in \mathbb{N}\}$ converges a.s. to $f(x_{0})$. 
    \end{lemma}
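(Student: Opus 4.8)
The plan is to reduce the two required convergences to almost sure convergence, where Lemma~\ref{lemm.converge a.s.1} is available, and then to keep careful track of how the random indices compose. First I would recall that $\mathcal{T}_{\varepsilon,\lambda}$-convergence is precisely convergence in probability, so that the classical fact ``convergence in probability implies almost sure convergence along a subsequence'' provides an \emph{ordinary} subsequence $\{x_{m_{j}},j\in\mathbb{N}\}$ (with $m_{j}\in\mathbb{N}$ strictly increasing) of $\{x_{n},n\in\mathbb{N}\}$ such that $\{x_{m_{j}},j\in\mathbb{N}\}$ converges a.s.\ to $x_{0}$. This subsequence still converges in $\mathcal{T}_{\varepsilon,\lambda}$ to $x_{0}$, so the random sequential continuity of $f$ at $x_{0}$ yields a random subsequence $\{y_{k},k\in\mathbb{N}\}$ of $\{x_{m_{j}},j\in\mathbb{N}\}$, say $y_{k}=x_{m_{j_{k}}}=\sum_{l=1}^{\infty}I_{(j_{k}=l)}x_{m_{l}}$ with $j_{k}\in L^{0}(\mathcal{F},\mathbb{N})$ and $j_{k}<j_{k+1}$ on $1$, such that $\{f(y_{k}),k\in\mathbb{N}\}$ converges in $\mathcal{T}_{\varepsilon,\lambda}$ to $f(x_{0})$.

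Next I would apply Lemma~\ref{lemm.converge a.s.1} to the a.s.\ convergent sequence $\{x_{m_{j}},j\in\mathbb{N}\}$ (which lies in the $\sigma$-stable set $G_{1}$) and its random subsequence $\{y_{k},k\in\mathbb{N}\}$, concluding that $\{y_{k},k\in\mathbb{N}\}$ also converges a.s.\ to $x_{0}$. On the other side, $\{f(y_{k}),k\in\mathbb{N}\}$ converges to $f(x_{0})$ in probability, so there is an ordinary subsequence $\{y_{k_{i}},i\in\mathbb{N}\}$ (with $k_{i}\in\mathbb{N}$ strictly increasing) such that $\{f(y_{k_{i}}),i\in\mathbb{N}\}$ converges a.s.\ to $f(x_{0})$; being an ordinary subsequence of the a.s.\ convergent sequence $\{y_{k},k\in\mathbb{N}\}$, the sequence $\{y_{k_{i}},i\in\mathbb{N}\}$ itself converges a.s.\ to $x_{0}$.

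It then remains to verify that $\{y_{k_{i}},i\in\mathbb{N}\}$ is genuinely a random subsequence of the original $\{x_{n},n\in\mathbb{N}\}$ in the sense of Definition~\ref{defn.random sequentially compact}. For this I would set $n_{i}:=\sum_{l=1}^{\infty}I_{(j_{k_{i}}=l)}m_{l}\in L^{0}(\mathcal{F},\mathbb{N})$. Since $m$ is strictly increasing on $\mathbb{N}$ and $j_{k_{i}}<j_{k_{i+1}}$ on $1$ (the latter because $k_{i}<k_{i+1}$ in $\mathbb{N}$ and $j_{k}<j_{k+1}$ on $1$ for every $k$), one obtains $n_{i}<n_{i+1}$ on $1$; and since $m$ is injective, $(n_{i}=r)=(j_{k_{i}}=l)$ whenever $r=m_{l}$ and $(n_{i}=r)=0$ otherwise, whence by regularity $x_{n_{i}}=\sum_{l=1}^{\infty}I_{(j_{k_{i}}=l)}x_{m_{l}}=y_{k_{i}}$. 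Thus $\{x_{n_{i}},i\in\mathbb{N}\}=\{y_{k_{i}},i\in\mathbb{N}\}$ is a random subsequence of $\{x_{n},n\in\mathbb{N}\}$ which converges a.s.\ to $x_{0}$ and satisfies that $\{f(x_{n_{i}}),i\in\mathbb{N}\}$ converges a.s.\ to $f(x_{0})$, as desired.

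The crux — and the reason the argument cannot simply feed the random subsequence produced by random sequential continuity back into the hypothesis — is exactly the failure of part~(2) of \cite[Lemma~3.5]{GWXYC2025}: a random subsequence of a $\mathcal{T}_{\varepsilon,\lambda}$-convergent sequence need not converge in $\mathcal{T}_{\varepsilon,\lambda}$. Passing first to an ordinary, hence a.s.\ convergent, subsequence and only then invoking Lemma~\ref{lemm.converge a.s.1}, which \emph{does} survive random subsequencing, is what forces the two a.s.\ convergences to hold along one common random subsequence; the remaining effort is the routine index bookkeeping above, i.e.\ checking that composing an ordinary subsequence, a random subsequence, and another ordinary subsequence again produces a random subsequence.
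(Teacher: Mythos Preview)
Your proof is correct and follows essentially the same approach as the paper's: pass to an ordinary a.s.\ convergent subsequence, apply random sequential continuity to obtain a random subsequence along which the images converge in $\mathcal{T}_{\varepsilon,\lambda}$, extract a further ordinary subsequence for a.s.\ convergence of the images, and invoke Lemma~\ref{lemm.converge a.s.1} to retain a.s.\ convergence of the arguments. The only cosmetic difference is that the paper absorbs the first ordinary subsequence into the original sequence via a ``without loss of generality'' reduction, thereby avoiding the explicit index bookkeeping you carry out in your third paragraph; your version spells out that composition honestly, which is a perfectly good substitute.
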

    \begin{proof}
        Since $\{x_{n},n\in \mathbb{N}\}$ converges in $\mathcal{T}_{\varepsilon,\lambda}$ to $x_{0}\in G$, there exists a subsequence $\{x'_{i},i\in \mathbb{N}\}$ of $\{x_{n},n\in \mathbb{N}\}$ such that  $\{x'_{i},i\in \mathbb{N}\}$ converges a.s. to $x_{0}$. We can assume, without loss of generality, that $\{x'_{i},i\in \mathbb{N}\}$ is just $\{x_{n},n\in \mathbb{N}\}$ itself, namely, $\{x_{n},n\in \mathbb{N}\}$ converges a.s. to $x_{0}$. Further, since $f$ is random sequentially continuous, there exists a  random subsequence $\{x_{n_{k}},k\in \mathbb{N}\}$ of $\{x_{n},n\in \mathbb{N}\}$ such that $\{f(x_{n_{k}}),k\in \mathbb{N}\}$ converges in $\mathcal{T}_{\varepsilon,\lambda}$ to $f(x_{0})$.

        \par 
        For $\{f(x_{n_{k}}),k\in\mathbb{N}\}$, there exists a subsequence $\{f(x_{n_{k_{l}}}),l\in\mathbb{N}\}$ that converges a.s. to $f(x_{0})$. Clearly,  $\{x_{n_{k_{l}}},l\in\mathbb{N}\}$ is a subsequence of $\{x_{n_{k}},k\in\mathbb{N}\}$ and hence a random subsequence of $\{x_{n},n\in\mathbb{N}\}$. By Lemma \ref{lemm.converge a.s.1}, $\{x_{n_{k_{l}}},l\in\mathbb{N}\}$ also converges a.s. to $x_{0}$.
    \end{proof}

    \par
    The proof of Proposition \ref{prop.converge a.s.3} below is merely a slight modification to the original proof of \cite[Lemma 4.4]{GWXYC2025}. The reader will find that the new proof essentially only replace part (2) of \cite[Lemma 3.5]{GWXYC2025} with Lemma \ref{lemm.converge a.s.1} in the original proof of \cite[Lemma 4.4]{GWXYC2025}.

    \begin{proposition}\label{prop.converge a.s.3}
    	Let $(E_{1},\|\cdot\|)$ and $(E_{2},\|\cdot\|)$ be two $RN$ modules over $\mathbb{K}$ with base $(\Omega,\mathcal{F},P)$, $G_{1}\subset E_{1}$ and $G_{2}\subset E_{2}$ two $\sigma$-stable subsets, and $f: G_{1}\rightarrow G_{2}$ a $\sigma$-stable random sequentially continuous mapping. For a sequence $\{(x_{1}^{m}, x_{2}^{m},\cdots, x_{l}^{m}), m\in \mathbb{N} \}$ in $G_{1}^{l}$, where $l$ is a fixed positive integer and $G^{l}_{1}$ is the $l$--th Cartesian power set of $G_{1}$, if there exists a random subsequence $\{(x_{1}^{M_{n}^{(0)}}, x_{2}^{M_{n}^{(0)}}, \cdots, x_{l}^{M_{n}^{(0)}}), n\in \mathbb{N}\}$ of which such that $\{x_{i}^{M_{n}^{(0)}}, n\in \mathbb{N}\}$ converges in $\mathcal{T}_{\varepsilon,\lambda}$ to some $y_{i}\in G_{1}$ for each $i\in \{1,2,\cdots, l\}$, then there exists a random subsequence $\{(x_{1}^{M_{n}}, x_{2}^{M_{n}}, \cdots, x_{l}^{M_{n}}),n\in \mathbb{N}\}$ of $\{(x_{1}^{M_{n}^{(0)}}, x_{2}^{M_{n}^{(0)}}, \cdots, x_{l}^{M_{n}^{(0)}}), n\in \mathbb{N}\}$ such that $\{x_{i}^{M_{n}}, n\in \mathbb{N}\}$ converges in $\mathcal{T}_{\varepsilon,\lambda}$ to $y_i$ and $\{f(x_{i}^{M_{n}}),n\in \mathbb{N}\}$ converges in $\mathcal{T}_{\varepsilon,\lambda}$ to $f(y_{i})$ for each $i\in \{1,2,\cdots, l\}$.
    \end{proposition}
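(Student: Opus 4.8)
The plan is to run a finite induction over the $l$ coordinates, working throughout with \emph{almost sure} convergence rather than $\mathcal{T}_{\varepsilon,\lambda}$-convergence, so that every subsequent extraction of a random subsequence is controlled by Lemma \ref{lemm.converge a.s.1} in place of the (now known to be false) part (2) of \cite[Lemma 3.5]{GWXYC2025}.

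First I would reduce to the case of a.s. convergence. Since $\mathcal{T}_{\varepsilon,\lambda}$ is the topology of convergence in probability and each of the finitely many sequences $\{x_i^{M_n^{(0)}},n\in\mathbb{N}\}$, $i=1,\dots,l$, converges in probability to $y_i$, one may successively pass to ordinary subsequences and, after $l$ steps, obtain an ordinary subsequence of $\{(x_1^{M_n^{(0)}},\dots,x_l^{M_n^{(0)}}),n\in\mathbb{N}\}$ along which every coordinate converges a.s. Since an ordinary subsequence of a random subsequence is again a random subsequence, after relabelling we may assume without loss of generality that $\{x_i^{M_n^{(0)}},n\in\mathbb{N}\}$ converges a.s. to $y_i$ for every $i\in\{1,\dots,l\}$; any random subsequence of this relabelled sequence is still a random subsequence of the one in the statement.

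Next comes the inductive step. Suppose that for some $i\in\{1,\dots,l\}$ we have produced a random subsequence $\{M_n^{(i-1)},n\in\mathbb{N}\}$ of $\{M_n^{(0)},n\in\mathbb{N}\}$ such that $\{x_j^{M_n^{(i-1)}},n\in\mathbb{N}\}$ converges a.s. to $y_j$ for every $j$ and $\{f(x_j^{M_n^{(i-1)}}),n\in\mathbb{N}\}$ converges a.s. to $f(y_j)$ for every $j<i$. Applying Lemma \ref{lemm.converge a.s.2} to $f$ and the sequence $\{x_i^{M_n^{(i-1)}},n\in\mathbb{N}\}$, which converges in $\mathcal{T}_{\varepsilon,\lambda}$ to $y_i\in G_1$, yields a random subsequence; using that $n\mapsto M_n^{(i-1)}$ is strictly increasing on $1$ and taking countable concatenations, one identifies this with a sequence $\{x_i^{M_n^{(i)}},n\in\mathbb{N}\}$ for a suitable random subsequence $\{M_n^{(i)},n\in\mathbb{N}\}$ of $\{M_n^{(i-1)},n\in\mathbb{N}\}$, along which $\{x_i^{M_n^{(i)}},n\in\mathbb{N}\}$ converges a.s. to $y_i$ and $\{f(x_i^{M_n^{(i)}}),n\in\mathbb{N}\}$ converges a.s. to $f(y_i)$. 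For the remaining coordinates: if $j\neq i$, then $\{x_j^{M_n^{(i)}},n\in\mathbb{N}\}$ is a random subsequence of $\{x_j^{M_n^{(i-1)}},n\in\mathbb{N}\}$, hence converges a.s. to $y_j$ by Lemma \ref{lemm.converge a.s.1}; and if $j<i$, then, because $f$ is $\sigma$-stable, $\{f(x_j^{M_n^{(i)}}),n\in\mathbb{N}\}$ is a random subsequence of $\{f(x_j^{M_n^{(i-1)}}),n\in\mathbb{N}\}$, hence converges a.s. to $f(y_j)$ by Lemma \ref{lemm.converge a.s.1}. After the $l$-th step, setting $M_n:=M_n^{(l)}$ produces a random subsequence along which $\{x_j^{M_n},n\in\mathbb{N}\}$ converges a.s., and hence in $\mathcal{T}_{\varepsilon,\lambda}$, to $y_j$ and $\{f(x_j^{M_n}),n\in\mathbb{N}\}$ converges a.s., and hence in $\mathcal{T}_{\varepsilon,\lambda}$, to $f(y_j)$ for every $j\in\{1,\dots,l\}$, which is the assertion.

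The only genuine departure from the original argument of \cite[Lemma 4.4]{GWXYC2025} — and the point requiring care — is the interplay of two facts: that a.s. convergence (unlike $\mathcal{T}_{\varepsilon,\lambda}$-convergence) is inherited by random subsequences, by Lemma \ref{lemm.converge a.s.1}, and that the $\sigma$-stability of $f$ is precisely what makes $\{f(x_j^{M_n^{(i)}}),n\in\mathbb{N}\}$ literally a random subsequence of $\{f(x_j^{M_n^{(i-1)}}),n\in\mathbb{N}\}$, so that the a.s. convergence already secured for the earlier coordinates survives the later extractions. Everything else — the reduction to a.s. convergence and the lifting of a random subsequence of a coordinate sequence to a random subsequence of the index sequence — is routine bookkeeping with countable concatenations.
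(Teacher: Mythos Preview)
Your proposal is correct and follows essentially the same route as the paper: reduce to a.s.\ convergence of all coordinates by passing to an ordinary subsequence, then run a finite induction over the coordinates, at each step invoking Lemma~\ref{lemm.converge a.s.2} to handle the current coordinate and Lemma~\ref{lemm.converge a.s.1} together with the $\sigma$-stability of $f$ to preserve what was already secured. The paper carries out the identification $M_k^{(i)}:=M_{n_k}^{(i-1)}$ explicitly rather than phrasing it as a lifting, but this is exactly the bookkeeping you describe.
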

    \begin{proof}
    	Since $\{x_{i}^{M_{n}^{(0)}}, n\in \mathbb{N}\}$ converges in $\mathcal{T}_{\varepsilon,\lambda}$ to $y_{i}$ for each $i\in \{1,2,\cdots,l\}$, there exists a subsequence $\{(x_{1}^{N_{n}}, x_{2}^{N_{n}},\cdots, x_{l}^{N_{n}}),n\in \mathbb{N}\}$ of $\{(x_{1}^{M_{n}^{(0)}}, x_{2}^{M_{n}^{(0)}},$ $ \cdots, x_{l}^{M_{n}^{(0)}}), n\in \mathbb{N}\}$ such that $\{x_{i}^{N_{n}},n\in \mathbb{N}\}$ converges a.s. to $y_{i}$ for each $i\in \{1,2,\cdots,l\}$. We can assume, without loss of generality, that $\{(x_{1}^{N_{n}}, x_{2}^{N_{n}},$ $\cdots, x_{l}^{N_{n}}),n\in \mathbb{N}\}$ is just $\{(x_{1}^{M_{n}^{(0)}}, x_{2}^{M_{n}^{(0)}}, \cdots, x_{l}^{M_{n}^{(0)}}), n\in \mathbb{N}\}$ itself, namely $\{x_{i}^{M_{n}^{(0)}}, n\in \mathbb{N}\}$ converges a.s. to $y_{i}$ for each $i\in \{1,2,\cdots,l\}$.

    	\par 
    	For $\{x_{1}^{M_{n}^{(0)}}, n\in \mathbb{N}\}$, since $f$ is random sequentially continuous, by Lemma \ref{lemm.converge a.s.2} there exists a random subsequence $\{x_{1}^{M^{(0)}_{n_{k}}}, k\in \mathbb{N}\}$ of $\{x_{1}^{M_{n}^{(0)}}, n\in \mathbb{N}\}$ such that $\{x_{1}^{M^{(0)}_{n_{k}}}, k\in\mathbb{N}\}$ converges a.s. to $y_{1}$ and $\{f(x_{1}^{M^{(0)}_{n_{k}}}), k\in\mathbb{N}\}$ converges a.s. to $f(y_{1})$. Let $M^{(1)}_{k}= M^{(0)}_{n_{k}}$ for each $k\in\mathbb{N}$, namely,  $M^{(1)}_{k}=\sum_{l=1}^{\infty}I_{(n_{k}=l)}M^{(0)}_{l}$. Then  $\{(x_{1}^{M_{n}^{(1)}},x_{2}^{M_{n}^{(1)}},\cdots,x_{l}^{M_{n}^{(1)}}), n\in \mathbb{N}\}$ is a random subsequence of $\{(x_{1}^{M_{n}^{(0)}}, x_{2}^{M_{n}^{(0)}},$ $ \cdots, x_{l}^{M_{n}^{(0)}}), n\in \mathbb{N}\}$ such that $\{x_{i}^{M_{n}^{(1)}}, n\in\mathbb{N}\}$ still converges a.s. to $y_{i}$ for each $i\in \{1,2,\cdots,l\}$ (by Lemma \ref{lemm.converge a.s.1}) and $\{f(x_{1}^{M_{n}^{(1)}}), n\in\mathbb{N}\}$ converges a.s. to $f(y_{1})$.

    	\par 
    	For $\{x_{2}^{M_{n}^{(1)}}, n\in \mathbb{N}\}$, by Lemma \ref{lemm.converge a.s.2} there exists a random subsequence $\{x_{2}^{M_{n_{k}}^{(1)}}, k\in \mathbb{N}\}$ of $\{x_{2}^{M_{n}^{(1)}},n\in \mathbb{N}\}$ such that $\{x_{2}^{M_{n_{k}}^{(1)}}, k\in \mathbb{N}\}$ converges a.s. to $y_{2}$ and $\{f(x_{2}^{M_{n_{k}}^{(1)}}), k\in \mathbb{N}\}$ converges a.s. to $f(y_{2})$. Since $f$ is $\sigma$-stable, it is easy to see that $\{f(x_{1}^{M_{n_{k}}^{(1)}}), k\in \mathbb{N}\}$ is also a random subsequence of $\{f(x_{1}^{M_{n}^{(1)}}), n\in \mathbb{N}\}$, then $\{f(x_{1}^{M_{n_{k}}^{(1)}}), k\in \mathbb{N}\}$ still converges a.s. to $f(y_{1})$. Let $M^{(2)}_{k}= M^{(1)}_{n_{k}}$ for each $k\in\mathbb{N}$, then $\{(x_{1}^{M_{n}^{(2)}},x_{2}^{M_{n}^{(2)}},\cdots,x_{l}^{M_{n}^{(2)}}),n\in \mathbb{N}\}$ is a random subsequence of $\{(x_{1}^{M_{n}^{(0)}}, x_{2}^{M_{n}^{(0)}}, \cdots, x_{l}^{M_{n}^{(0)}}), n\in \mathbb{N}\}$ such that $\{x_{i}^{M_{n}^{(2)}},n\in\mathbb{N}\}$ still converges a.s. to $y_{i}$ for each $i\in \{1,2,\cdots, l\}$ and $\{f(x_{i}^{M_{n}^{(2)}}), n\in \mathbb{N}\}$ converges a.s. to $f(y_{i})$ for each $i\in \{1,2\}$.

    	 \par 
    	 Inductively, we can obtain a random subsequence $\{(x_{1}^{M_{n}^{(l)}}, x_{2}^{M_{n}^{(l)}},\cdots, x_{l}^{M_{n}^{(l)}}),$ $n\in \mathbb{N}\}$ of $\{(x_{1}^{M_{n}^{(0)}}, x_{2}^{M_{n}^{(0)}}, \cdots, x_{l}^{M_{n}^{(0)}}), n\in \mathbb{N}\}$ such that $\{x_{i}^{M_{n}^{(l)}},n\in \mathbb{N}\}$ converges a.s. to $y_{i}$ for each $i\in \{1,2,\cdots,l\}$ and $\{f(x_{i}^{M_{n}^{(l)}}),n\in \mathbb{N}\}$ converges a.s. to $f(y_{i})$ for each $i\in \{1,2,\cdots,l\}$. Finally, taking $M_{n}=M_{n}^{l}$ for each $n\in \mathbb{N}$, we can, of course, obtain our desired result.
    \end{proof}

\section{Proof of Theorem \ref{thm.Kakutani}: Random Kakutani fixed point theorem}\label{sec.3}

\par  
As pointed out in Section \ref{sec.1}, the proof of Theorem \ref{thm.Kakutani} relies on the equivalence between a $\sigma$-stable random sequentially compact set and a stably sequentially compact set. To present the notion of a stably sequentially compact set, we first recall Definition \ref{defn.stable sequence} below, where the notion of a stable subsequence is a strengthened version of the original notion introduced in \cite{GMT2024}.

\begin{definition}\label{defn.stable sequence}
	Let $(E,\|\cdot\|)$ be an $RN$ module over $\mathbb{K}$ with base $(\Omega,\mathcal{F},P)$, $G$ a $\sigma$-stable subset of $E$ and $\{x_{n},n\in \mathbb{N}\}$ a sequence in $G$. 
	\begin{enumerate}[(1)]
		\item A $\sigma$-stable mapping $x$ from $L^{0}(\mathcal{F},\mathbb{N})$ to $G$ is called a stable sequence in $G$, denoting $x(u)$ by $ x_{u}$ for each $u\in L^{0}(\mathcal{F},\mathbb{N})$. 
		\item For each $u\in L^0(\mathcal{F},\mathbb{N})$, define 
		$$x_{u}=\sum_{n=1}^{\infty}I_{(u=n)}x_{n}.$$
		Then $\{x_{u},u\in L^0(\mathcal{F},\mathbb{N})\}$ is a stable sequence in $G$, called the stable sequence generated by $\{x_{n},n\in \mathbb{N}\}$.  
		\item A stable sequence $\{y_{v},v\in L^0(\mathcal{F},\mathbb{N})\}$ is called a stable subsequence of a stable sequence $\{x_{u},{u}\in L^0(\mathcal{F},\mathbb{N})\}$ if there exists a $\sigma$-stable mapping $\varphi:L^0(\mathcal{F},\mathbb{N})\rightarrow L^0(\mathcal{F},\mathbb{N})$ such that the following two conditions are satisfied:
		\begin{enumerate}
			\item [(i)] $y_{v}=x_{\varphi(v)}$ for each $v\in L^0(\mathcal{F},\mathbb{N})$;
			\item [(ii)] $\varphi(n)<\varphi(n+1)$ on $1$ for each $n\in \mathbb{N}$.
		\end{enumerate}
	  For simplicity, we may also write $\{x_{\varphi(v)}, v\in L^0(\mathcal{F},\mathbb{N})\}$ for $\{y_{v}, v\in L^0(\mathcal{F},\mathbb{N})\}$.
	\end{enumerate}
\end{definition}

\par 
According to part (3) of Definition \ref{defn.stable sequence}, it is easy to check that $\{x_{\varphi(v)}, v\in L^0(\mathcal{F},\mathbb{N})\}$ is also a subnet of $\{x_{u}, u\in L^0(\mathcal{F},\mathbb{N})\}$.

\par 
Let $E$ be an $L^{0}(\mathcal{F},\mathbb{K})$-module and $G$ be a $\sigma$-stable subset of $E$. A subset $H$ of $G$ is said to be stably finite if there exist a sequence $\{G_{n},n\in \mathbb{N}\}$ of nonempty finite subsets of $G$ and $\{a_n,n\in \mathbb{N}\}\in p(1)$ such that $H=\sum_{n=1}^{\infty}I_{a_{n}}\sigma(G_{n})$.

\begin{definition}[\cite{GMT2024}]\label{defn.stably sequentically compactness}
	Let $(E,\|\cdot\|)$ be an $RN$ module over $\mathbb{K}$ with base $(\Omega,\mathcal{F},P)$ and $G$ be a $\sigma$-stable subset of $E$. $G$ is said to be 
	\begin{enumerate} [(1)]
		\item stably sequentially compact if every stable sequence in $G$ admits a stable subsequence that converges in $\mathcal{T}_{c}$ to a point of $G$.
		\item random totally bounded if, for any given $\varepsilon\in L^{0}_{++}(\mathcal{F})$, there exist a sequence $\{G_{n},n\in \mathbb{N}\}$ of nonempty finite subsets of $G$ and $\{a_n,n\in \mathbb{N}\}\in p(1)$ such that 
		$$I_{a_{n}}G\subseteq I_{a_n}[\sigma(G_{n})+B(\theta,\varepsilon)] ~\text{for~each}~ n\in \mathbb{N}.$$  
	\end{enumerate}
\end{definition}

\begin{remark}\label{rmk.net}
In part (2) of Definition \ref{defn.stably sequentically compactness}, $I_{a_{n}}G\subseteq I_{a_n}[\sigma(G_{n})+B(\theta,\varepsilon)]$ for each $n\in \mathbb{N}$ implies that 
$$G\subseteq \sum_{n=1}^{\infty}I_{a_{n}}\sigma(G_{n})+B(\theta,\varepsilon)=\bigcup_{x\in \sum_{n=1}^{\infty}I_{a_{n}}\sigma(G_{n})}B(x,\varepsilon).$$	
Hence, every random totally bounded set necessarily possesses a stably finite random $\varepsilon$-net. It should be noted that a stably finite set is neither finite nor even countable in general, and therefore a random totally bounded set is much more complicated than a classical totally bounded set.
\end{remark}

\par 
One can easily see that the strengthened notion of a stable subsequence still makes Proposition \ref{prop.stably compact} below hold, which will play a crucial role in the proof of Theorem \ref{thm.Kakutani}.

\begin{proposition}[\cite{GMT2024}]\label{prop.stably compact}
	Let $(E,\|\cdot\|)$ be an $RN$ module over $\mathbb{K}$ with base $(\Omega,\mathcal{F},P)$ and $G$ be a $\sigma$-stable subset of $E$. Then the following statements are equivalent:
	\begin{enumerate}[(1)]
		\item [(1)] $G$ is stably sequentially compact.
		\item [(2)]	$G$ is random totally bounded and complete.
		\item [(3)]	$G$ is random sequentially compact.	
	\end{enumerate}
\end{proposition}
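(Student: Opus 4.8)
The plan is to split the cycle of equivalences into the two links $(2)\Leftrightarrow(3)$ and $(1)\Leftrightarrow(3)$, and to note that only a small piece of the latter is genuinely new here. The link $(2)\Leftrightarrow(3)$ involves no stable subsequences at all: it is the random Hausdorff theorem \cite[Theorem 3.3]{GWXYC2025}, which says that a $\sigma$-stable subset of an $RN$ module is random sequentially compact if and only if it is $\mathcal{T}_{\varepsilon,\lambda}$-complete and random totally bounded; since $G$ is $\sigma$-stable, part (2) of Proposition \ref{prop.stable} together with Remark \ref{rmk.same} identifies $\mathcal{T}_{\varepsilon,\lambda}$-completeness with completeness in the sense of statement (2), and the random total boundedness there is exactly the notion of part (2) of Definition \ref{defn.stably sequentically compactness}. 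For $(1)\Leftrightarrow(3)$, \cite[Theorem 2.21]{GMT2024} already proves the equivalence when ``stable subsequence'' is read in the original (weaker) sense of \cite{GMT2024}. Since a stable subsequence in the strengthened sense of part (3) of Definition \ref{defn.stable sequence} is in particular one in the original sense, statement (1) as formulated here formally implies the original version and hence implies $(3)$. Thus the only thing left to verify is the single implication $(3)\Rightarrow(1)$ with the strengthened conclusion.

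For that I would re-run the extraction behind $(3)\Rightarrow(1)$ of \cite{GMT2024}, keeping track of monotonicity. Let $\{x_{u},u\in L^{0}(\mathcal{F},\mathbb{N})\}$ be a stable sequence in $G$; by $\sigma$-stability it is the stable sequence generated by the ordinary sequence $\{x_{n},n\in\mathbb{N}\}$ with $x_{n}=x(n)$. Random sequential compactness of $G$ yields a random subsequence $\{x_{n_{k}},k\in\mathbb{N}\}$ with $n_{k}<n_{k+1}$ on $1$ that converges in $\mathcal{T}_{\varepsilon,\lambda}$ to some $x_{0}\in G$; passing to an ordinary subsequence (which is again a random subsequence of $\{x_{n},n\in\mathbb{N}\}$, the $n_{k}$'s still strictly increasing on $1$) we may further assume $\{x_{n_{k}},k\in\mathbb{N}\}$ converges a.s.\ to $x_{0}$. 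Let $\varphi:L^{0}(\mathcal{F},\mathbb{N})\rightarrow L^{0}(\mathcal{F},\mathbb{N})$ be the unique $\sigma$-stable map with $\varphi(k)=n_{k}$ for $k\in\mathbb{N}$ (it exists because $L^{0}(\mathcal{F},\mathbb{N})$ is $\sigma$-stable, and $\varphi(k)<\varphi(k+1)$ on $1$ by construction). Then $\{x_{\varphi(v)},v\in L^{0}(\mathcal{F},\mathbb{N})\}$ is a stable subsequence of $\{x_{u}\}$ in the strengthened sense, and its underlying ordinary sequence $\{x_{\varphi(k)},k\in\mathbb{N}\}$ is exactly $\{x_{n_{k}},k\in\mathbb{N}\}$.

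It remains to see that $\{x_{\varphi(v)}\}$ converges in $\mathcal{T}_{c}$ to $x_{0}$, and here one needs the right translation between the two topologies, which I would isolate as an auxiliary lemma: a stable sequence $\{y_{v},v\in L^{0}(\mathcal{F},\mathbb{N})\}$, regarded as a net over $(L^{0}(\mathcal{F},\mathbb{N}),\leq)$, converges in $\mathcal{T}_{c}$ to $x_{0}$ if and only if the ordinary sequence $\{y_{n},n\in\mathbb{N}\}$ converges a.s.\ to $x_{0}$. The ``if'' part (the direction used above) is proved by choosing, for each $\varepsilon\in L_{++}^{0}(\mathcal{F})$, the index $v_{0}\in L^{0}(\mathcal{F},\mathbb{N})$ given pointwise by $v_{0}(\omega)=1+\sup\{n:\|y_{n}-x_{0}\|(\omega)\geq\varepsilon(\omega)\}$ (a.s.\ finite and $\mathcal{F}$-measurable by a.s.\ convergence), and checking from $y_{v}=\sum_{n}I_{(v=n)}y_{n}$ that $\|y_{v}-x_{0}\|<\varepsilon$ on $1$ for every $v\geq v_{0}$; the ``only if'' part follows by testing the net against $v=n\vee v_{0}$ and letting $\varepsilon$ run through the constants $1/m$, and it is what lets one read the reverse implication $(1)\Rightarrow(3)$ directly off a strengthened stable subsequence by setting $n_{k}:=\varphi(k)$.

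The main obstacle is conceptual, not computational: one must not confuse $\mathcal{T}_{c}$-convergence of the \emph{stable} subsequence with $\mathcal{T}_{c}$-convergence of the \emph{ordinary} sequence $\{x_{\varphi(k)},k\in\mathbb{N}\}$ --- the latter is typically false, since a random sequentially compact set need contain no $\mathcal{T}_{c}$-convergent ordinary subsequence at all --- and instead pin down the correct substitute, namely a.s.\ convergence of that ordinary sequence, which is exactly the feature that makes random sequential compactness (defined via $\mathcal{T}_{\varepsilon,\lambda}$) interchangeable with stable sequential compactness (defined via $\mathcal{T}_{c}$). Everything else is routine bookkeeping with countable concatenations: the well-definedness and strict monotonicity of the $\sigma$-stable extension $\varphi$, the closure of the class of random subsequences under passing to ordinary subsequences, and the verification that $\{x_{\varphi(v)}\}$ meets part (3) of Definition \ref{defn.stable sequence}; the substantive inputs $(2)\Leftrightarrow(3)$ and the original $(1)\Leftrightarrow(3)$ are quoted from \cite{GWXYC2025} and \cite{GMT2024}, respectively.
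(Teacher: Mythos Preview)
Your proposal is correct, and in fact it does more than the paper does. The paper provides no proof of this proposition: it is quoted from \cite{GMT2024} (see Remark \ref{lemm.special case}), with the single-sentence justification, placed just before the statement, that ``one can easily see that the strengthened notion of a stable subsequence still makes Proposition \ref{prop.stably compact} below hold.'' Your argument is precisely a careful unpacking of that sentence. You correctly identify that $(2)\Leftrightarrow(3)$ is the random Hausdorff theorem of \cite{GWXYC2025} (involving no stable subsequences at all), that the original $(1)\Leftrightarrow(3)$ is \cite[Theorem 2.21]{GMT2024}, and that the only genuinely new piece is $(3)\Rightarrow(1)$ with the strengthened conclusion; your auxiliary lemma equating a.s.\ convergence of $\{y_{n},n\in\mathbb{N}\}$ with $\mathcal{T}_{c}$-convergence of the generated stable sequence $\{y_{v},v\in L^{0}(\mathcal{F},\mathbb{N})\}$ is exactly the right bridge, and the construction of $\varphi$ as the $\sigma$-stable extension of $k\mapsto n_{k}$ (with strict monotonicity inherited from the random subsequence) is sound. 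In short, you have supplied what the paper deliberately leaves implicit.
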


\begin{remark}\label{lemm.special case}
	In \cite{GMT2024}, the notions of stably sequentially compact sets and random totally bounded sets were introduced in a $d$-$\sigma$-stable random metric space. Definition \ref{defn.stably sequentically compactness} and Proposition \ref{prop.stably compact} are in fact special cases of \cite[Definition 2.19]{GMT2024} and \cite[Theorem 2.12]{GMT2024}, respectively, since a $\sigma$-stable subset $G$ of an $RN$ module $(E,\|\cdot\|)$ naturally forms a $d$-$\sigma$-stable random metric space $(G,d)$, where the random metric $d:G\times G\rightarrow L^{0}_{+}(\mathcal{F})$ is defined by $d(x,y)=\|x-y\|$ for any $x,y\in G$ (see \cite{GMT2024} for details). Here, we present only these special cases, since the present work is only concerned with the setting of $RN$ modules.
\end{remark}

\begin{lemma}\label{lemm.basic}
	Let $(E_{1},\|\cdot\|_{1})$ and $(E_{2},\|\cdot\|_{2})$ be two $RN$ modules over $\mathbb{K}$ with base $(\Omega,\mathcal{F},P)$, $X\subseteq E_{1}, Y\subseteq E_{2}$ two $\sigma$-stable subsets and $F:X\rightarrow 2^{Y}\setminus \{\emptyset\}$ a $\sigma$-stable mapping. For any $a\in B_{\mathcal{F}}$ and any $x\in E_{1}$, if there exists a finitely stable subset $G$ of $X$ such that $I_{a}x\in I_{a}G$, then $I_{a}F(x)\subseteq I_{a}F(G)$. 
\end{lemma}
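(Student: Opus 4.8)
The plan is to obtain the inclusion as a one-step consequence of the defining identity for a $\sigma$-stable set-valued mapping, together with the fact that the values of $F$ are nonempty.

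First I would unpack the hypothesis $I_{a}x\in I_{a}G$: since $I_{a}G=\{I_{a}g:g\in G\}$, there is a witness $g_{0}\in G$ with $I_{a}x=I_{a}g_{0}$. The point $x$ is of course taken in the domain $X$ of $F$, so that $F(x)$ is defined, and $g_{0}\in G\subseteq X$. Using $I_{a}x=I_{a}g_{0}$ and $I_{a}+I_{a^{c}}=1$, the concatenation $I_{a}x+I_{a^{c}}g_{0}$ is exactly $g_{0}$; this is the key algebraic observation that lets us feed $x$ into the $\sigma$-stability of $F$ through an element of $G$.

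Next I would apply part (1) of Definition \ref{defn.rusc} to the identity $g_{0}=I_{a}x+I_{a^{c}}g_{0}$. To fit the countable-concatenation form of the definition, one uses the partition of unity $\{a,a^{c},0,0,\dots\}\in p(1)$ and the sequence $x,g_{0},g_{0},\dots$ in $X$, which gives
\[
F(g_{0})=I_{a}F(x)+I_{a^{c}}F(g_{0})=\bigl\{I_{a}y_{1}+I_{a^{c}}y_{2}:y_{1}\in F(x),\ y_{2}\in F(g_{0})\bigr\}.
\]
Now I would take an arbitrary $y_{1}\in F(x)$, pick some $y_{2}\in F(g_{0})$ (possible since $F(g_{0})\neq\emptyset$), and set $z:=I_{a}y_{1}+I_{a^{c}}y_{2}$. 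Then $z\in F(g_{0})\subseteq F(G)$, while $I_{a}z=I_{a}y_{1}$ because $I_{a}I_{a^{c}}=0$, so $I_{a}y_{1}\in I_{a}F(G)$. As $y_{1}\in F(x)$ was arbitrary, this yields $I_{a}F(x)\subseteq I_{a}F(G)$.

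I do not anticipate a genuine obstacle: the argument is essentially a bookkeeping exercise. The only points that need a little care are the routine reduction of the countable-concatenation identity to the present two-term situation (padding the partition with zeros), and keeping track of the set-valued concatenation $I_{a}F(x)+I_{a^{c}}F(g_{0})$ — in particular, the nonemptiness of $F(g_{0})$ is exactly what is needed to manufacture the witness $z$. I would also remark that the finite stability of $G$ is not actually used beyond the fact that $G\subseteq X$; it is retained in the statement only because that is the form in which the lemma will later be invoked.
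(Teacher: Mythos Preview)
Your proof is correct and follows essentially the same one-step application of $\sigma$-stability as the paper, but with a small simplification. The paper picks an arbitrary $z\in G$, sets $x_{1}=I_{a}x+I_{a^{c}}z$, and invokes the finite stability of $G$ to conclude $x_{1}\in G$ (after implicitly replacing $I_{a}x$ by $I_{a}g$ for a witness $g\in G$); you instead use the witness $g_{0}$ itself as the second summand, so that $I_{a}x+I_{a^{c}}g_{0}=g_{0}\in G$ automatically. Your route therefore makes good on your closing remark: finite stability of $G$ really is dispensable, whereas the paper's formulation uses it. Both proofs tacitly require $x\in X$ for $F(x)$ and the $\sigma$-stability identity to make sense, which you rightly flag.
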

\begin{proof}
    Arbitrarily choose $z\in G$ and let $x_{1}=I_{a}x+I_{a^{c}}z$. Then $x_{1}\in G$, and we have 
	$$I_{a}F(x)+I_{a^{c}}F(z)=F(I_{a}x+I_{a^{c}}z)=F(x_{1})\subseteq F(G),$$
	implying $I_{a}F(x)\subseteq I_{a}F(G)$.
\end{proof}

 \par 
 Following the methodology employed in the proof of \cite[Lemma 4.8]{GWXYC2025}, we can establish Lemma \ref{lemm.projection} below. For completeness, we provide a detailed proof here.

 \begin{lemma}\label{lemm.projection}
 Let $(E,\|\cdot\|)$ be an $RN$ module over $\mathbb{K}$ with base $(\Omega,\mathcal{F},P)$ and $G$ be a $\sigma$-stable $L^{0}$-convex subset of $E$. For $r\in L^{0}_{++}(\mathcal{F})$ and $a\in B_{\mathcal{F}}$ with $a>0$, suppose that there exists a finite subset $\{x_{1},\cdots,x_{k}\}$ of $G$ such that $I_{a}G\subseteq I_{a}[\sigma(\{x_{1},\cdots,x_{k}\})+B(\theta,r)]$. For a finite subset $\{y_{1},\cdots,y_{k}\}$ of $G$, define a mapping $g:G\rightarrow G$ by 
 $$g(x)=\dfrac{1}{\sum_{j=1}^{k}\alpha_{j}(x)}\sum_{i=1}^{k}\alpha_{i}(x)y_{i}, \forall x\in G,$$
 where for each $i\in \{1,\cdots,k\}$, the mapping $\alpha_{i}:G\rightarrow L^{0}_{+}(\mathcal{F})$ is defined by 
 $$\alpha_{i}(x)=\max\{0,r-\|I_{a}x-I_{a} x_{i}\|\},~\forall x\in G.$$
 Then we have the following statements:
 \begin{enumerate}[(1)]
 	\item $\sum_{i=1}^{k}\alpha_{i}(x)>0$ on $1$ for any $x\in G$.
 	\item $g$ is well defined, $\sigma$-stable and $\mathcal{T}_{c}$-continuous.
 \end{enumerate}	
 \end{lemma}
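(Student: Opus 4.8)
The plan is to verify the two statements in order, since statement (2) depends crucially on statement (1).

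For part (1), fix $x\in G$. The key is to use the covering hypothesis $I_{a}G\subseteq I_{a}[\sigma(\{x_{1},\dots,x_{k}\})+B(\theta,r)]$. Since $x\in G$, there is a point in $\sigma(\{x_{1},\dots,x_{k}\})$, say $\sum_{j=1}^{k}I_{b_{j}}x_{j}$ with $\{b_{1},\dots,b_{k}\}$ a partition of $a$, together with an element of $B(\theta,r)$, such that $I_{a}x$ agrees with $I_{a}\bigl(\sum_{j}I_{b_{j}}x_{j}\bigr)$ up to that $B(\theta,r)$-perturbation; concretely $\|I_{a}x-\sum_{j}I_{b_{j}}(I_{a}x_{j})\|<r$ on $a$. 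Restricting to each $b_{i}$ we get $\|I_{a}x-I_{a}x_{i}\|<r$ on $b_{i}$, hence $\alpha_{i}(x)=r-\|I_{a}x-I_{a}x_{i}\|>0$ on $b_{i}$. On the complement $a^{c}$ we have $I_{a}x=\theta=I_{a}x_{i}$, so $\alpha_{i}(x)=r>0$ there for every $i$. Summing, $\sum_{i=1}^{k}\alpha_{i}(x)\ge \sum_{i=1}^{k}I_{b_{i}}\alpha_{i}(x)+I_{a^{c}}\sum_{i}\alpha_{i}(x)>0$ on $a$ (using $\bigvee b_{i}=a$) and $>0$ on $a^{c}$, hence $>0$ on $1$. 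This also shows $g$ is well defined since division by the strictly positive element $\sum_{j}\alpha_{j}(x)$ is legitimate in $L^{0}(\mathcal{F})$, and the $L^{0}$-convexity of $G$ guarantees $g(x)\in G$ because the coefficients $\alpha_{i}(x)/\sum_{j}\alpha_{j}(x)$ are nonnegative and sum to $1$.

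For part (2), $\sigma$-stability: one checks that each $\alpha_{i}$ is $\sigma$-stable, i.e. $\alpha_{i}(\sum_{n}I_{c_{n}}z_{n})=\sum_{n}I_{c_{n}}\alpha_{i}(z_{n})$ for any sequence $\{z_{n}\}$ in $G$ and any $\{c_{n}\}\in p(1)$; this follows because the $L^{0}$-norm, the constant $r$, $I_{a}$, the operations $\max\{0,\cdot\}$ and subtraction all commute with countable concatenation in a regular $L^{0}(\mathcal{F},\mathbb{K})$-module. Then $g$ is a quotient of $L^{0}$-linear-in-the-$\alpha_{i}$ combinations, and since countable concatenation distributes over sums, scalar multiplication and the (well-defined, nonvanishing) quotient, $g$ is $\sigma$-stable. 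For $\mathcal{T}_{c}$-continuity: each $\alpha_{i}$ is $\mathcal{T}_{c}$-continuous because $x\mapsto \|I_{a}x-I_{a}x_{i}\|$ is $\mathcal{T}_{c}$-continuous (the $L^{0}$-norm is $\mathcal{T}_{c}$-continuous and $x\mapsto I_{a}x$ is $\mathcal{T}_{c}$-continuous, being $L^{0}$-linear and nonexpansive in the $L^{0}$-norm), and $\max\{0,\cdot\}$ is $\mathcal{T}_{c}$-continuous on $L^{0}(\mathcal{F})$ since $|\max\{0,\xi\}-\max\{0,\eta\}|\le|\xi-\eta|$. Since $\sum_{j}\alpha_{j}(x)>0$ on $1$ for all $x$ by part (1), the map $x\mapsto 1/\sum_{j}\alpha_{j}(x)$ is $\mathcal{T}_{c}$-continuous (inversion is $\mathcal{T}_{c}$-continuous on $L^{0}_{++}(\mathcal{F})$), and finite sums and products of $\mathcal{T}_{c}$-continuous $L^{0}(\mathcal{F})$-valued maps, together with scalar multiplication $L^{0}(\mathcal{F})\times E\to E$ (which is $\mathcal{T}_{c}$-continuous), are $\mathcal{T}_{c}$-continuous; hence $g$ is $\mathcal{T}_{c}$-continuous.

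The main obstacle I anticipate is not any single deep step but rather the careful bookkeeping with the stratification structure in part (1): one must correctly extract, from the covering $I_{a}G\subseteq I_{a}[\sigma(\{x_{1},\dots,x_{k}\})+B(\theta,r)]$, a partition $\{b_{1},\dots,b_{k}\}$ of $a$ on which $\|I_{a}x-I_{a}x_{i}\|<r$, and then patch this together with the trivial behaviour on $a^{c}$ to conclude strict positivity on all of $1$. Everything in part (2) is then a routine verification that the standard building blocks (the $L^{0}$-norm, $\max\{0,\cdot\}$, finite sums, reciprocal on $L^{0}_{++}(\mathcal{F})$, and module operations) are $\sigma$-stable and $\mathcal{T}_{c}$-continuous, exactly paralleling the argument in \cite[Lemma 4.8]{GWXYC2025}.
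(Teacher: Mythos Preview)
Your proof is correct and follows essentially the same approach as the paper's: for (1) you use the covering hypothesis to extract a partition on which $\alpha_i(x)>0$ on the $i$-th piece and handle $a^c$ trivially, and for (2) you verify $\sigma$-stability and $\mathcal{T}_c$-continuity of each $\alpha_i$ and then of $g$, just as the paper does (with a bit more detail on the continuity side). One small wording slip: the element of $\sigma(\{x_1,\dots,x_k\})$ comes with a partition of $1$, not of $a$---the paper writes $\{b_i\}\in p(1)$ and works on $a\wedge b_i$---but since you only use the restriction to $a$ this is harmless.
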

 \begin{proof}
 	(1) For any given $x\in G$, since 
 	$$I_{a}G\subseteq I_{a}[\sigma(\{x_{1},\cdots,x_{k}\})+B(\theta,r)]=\sigma(\{I_{a}x_{1},\cdots,I_{a}x_{k}\})+I_{a}B(\theta,r),$$
 	there exists $\{b_{i},i=1\sim k\}\in p(1)$ such that 
 	$$I_{a}x\in \sum_{i=1}^{k}I_{a\wedge b_{i}}x_{i}+I_{a}B(\theta,r).$$
 	Hence, for each $i\in \{1,\cdots,k\}$,
 	$$I_{a\wedge b_{i}}x\in I_{a\wedge b_{i}}x_{i}+I_{a\wedge b_{i}}B(\theta,r),$$
 	implying 
 	$$r-\|I_{a}x-I_{a}x_{i}\|>0~\text{on}~a\wedge b_{i},$$
 	namely, 
 	$$\alpha_{i}(x)>0~\text{on}~a\wedge b_{i}.$$
 	Since $\alpha_{i}(x)\in L^{0}_{+}(\mathcal{F})$ for each $i\in \{1,\cdots,k\}$, it follows that $\sum_{i=1}^{k}\alpha_{i}(x)>0$ on $a$.

 	\par 
 	Besides, it is clear that $\sum_{i=1}^{k}\alpha_{i}(x)>0$ on $a^{c}$ for any $x\in G$. 
 	
 	\par 
 	To sum up, $\sum_{i=1}^{k}\alpha_{i}(x)>0$ on $1$.

 	\par 
 	(2) By (1), $g$ is well defined. For any $\{a_{n},n\in \mathbb{N}\}\in p(1)$ and any sequence $\{x_{n},n\in \mathbb{N}\}$ in $G$, since $G$ is $\sigma$-stable and each $\alpha_{i}$ is also $\sigma$-stable, we have 
 	\begin{align}
 		g(\sum_{n=1}^{\infty}I_{a_{n}}x_{n})&=\dfrac{1}{\sum_{j=1}^{k}\alpha_{j}(\sum_{n=1}^{\infty}I_{a_{n}}x_{n})}\sum_{i=1}^{k}\alpha_{i}(\sum_{n=1}^{\infty}I_{a_{n}}x_{n})y_{i}\nonumber\\
 		&=\dfrac{1}{\sum_{n=1}^{\infty}I_{a_{n}}(\sum_{j=1}^{k}\alpha_{j}(x_{n}))}\sum_{n=1}^{\infty}I_{a_{n}}(\sum_{i=1}^{k}\alpha_{i}(x_{n})y_{i})\nonumber\\
 		&=\sum_{n=1}^{\infty}I_{a_{n}}\dfrac{1}{\sum_{j=1}^{k}\alpha_{j}(x_{n})}\sum_{i=1}^{k}\alpha_{i}(x_{n})y_{i}\nonumber\\
 		&=\sum_{n=1}^{\infty}I_{a_{n}}g(x_{n}),\nonumber
 	\end{align}
 	which shows that $g$ is $\sigma$-stable.

 	\par 
 	Further, since each $\alpha_{i}$ is $\mathcal{T}_{c}$-continuous and $(E,\mathcal{T}_{c})$ is a topological module over the topological ring $(L^{0}(\mathcal{F},\mathbb{K}),\mathcal{T}_{c})$, $g$ must be $\mathcal{T}_{c}$-continuous.
 \end{proof}

 \par Now we can prove Theorem \ref{thm.Kakutani}.

\begin{proof}[Proof of Theorem \ref{thm.Kakutani}]
 
 Since $G$ is $\sigma$-stable and random sequentially compact, by Proposition \ref{prop.stably compact} $G$ is random totally bounded and $\mathcal{T}_{\varepsilon,\lambda}$-complete, so we can, without loss of generality, assume that $E$ is $\mathcal{T}_{\varepsilon,\lambda}$-complete (otherwise, we can consider the $\mathcal{T}_{\varepsilon,\lambda}$-completion of $E$ and note that $G$ is invariant in the process of $\mathcal{T}_{\varepsilon,\lambda}$-completion). Then $E$ is $\sigma$-stable.

	\par Fix an $n\in \mathbb{N}$. Since $G$ is random totally bounded, there exist $\{a_{m}^{n},m\in \mathbb{N}\}\in p(1)$ and a sequence $\{G_{m}^{n},m\in \mathbb{N}\}$ of finite subsets of $G$ such that
	$$G\subseteq\sum_{m=1}^{\infty}I_{a_{m}^{n}} \sigma(G_{m}^{n})+B(\theta,\frac{1}{n}).$$
	Let $G_{m}^{n}=\{x_{m,1}^{n},\cdots,x_{m,k_{m}^{n}}^{n}\}$ for any $m\in \mathbb{N}$, and further, we can, without loss of generality, assume that each $a_{m}^{n}>0$. For each $m\in\mathbb{N}$ and each $ i\in \{1,\cdots,k_{m}^{n}\}$, arbitrarily choose $y_{m,i}^{n}\in F(x_{m,i}^{n})$ and define a mapping $g_{m}^{n}:G\rightarrow G$ by 
	$$g_{m}^{n}(x)=\frac{1}{ \sum_{j=1}^{k_{m}^{n}} \alpha_{m,j}^{n} (x) } \sum_{i=1}^{k_{m}^{n}} \alpha_{m,i}^{n}(x) y_{m,i}^{n}, \forall x\in G,$$
	where $\alpha_{m,i}^{n}:G\rightarrow L^{0}_{+}(\mathcal{F})$ is defined by
	$$\alpha_{m,i}^{n}(x)=\max\{0,\frac{1}{n}-\|I_{a_{m}^{n}}x-I_{a_{m}^{n}} x_{m,i}^{n}\| \},~\forall x\in G.$$ 
	By Lemma \ref{lemm.projection}, $g_{m}^{n}$ is well defined, $\sigma$-stable and $\mathcal{T}_{c}$-continuous, and hence the noncompact Schauder fixed point theorem \cite[Theorem 2.12]{GWXYC2025} implies that $g_{m}^{n}$ has a fixed point $x_{m}^{n}\in G$, then
	$$g_{m}^{n}(x_{m}^{n})=x_{m}^{n}\in \sum_{m=1}^{\infty}I_{a_{m}^{n}} \sigma(\{x_{m,1}^{n},\cdots,x_{m,k_{m}^{n}}^{n}\})+B(\theta,\frac{1}{n}).$$
	Moreover, for each $m\in \mathbb{N}$ and each $i\in \{1,\cdots,k_{m}^{n}\}$, let
	$$\mathcal{D}_{m,i}^{n}=\{a\in B_{\mathcal{F}}:0\leq a\leq a_{m}^{n}~\text{and}~I_{a}x_{m,i}^{n}\in I_{a}(x_{m}^{n}+ B(\theta, \frac{1}{n}))\}$$
	and $d_{m,i}^{n}=\bigvee \mathcal{D}_{m,i}^{n}$, then it is easy to check that $d_{m,i}^{n}$ is attained, namely,  
	\begin{align}\label{eq.Kakutani1}
		I_{d_{m,i}^{n}}x_{m,i}^{n}\in I_{d_{m,i}^{n}}(x_{m}^{n}+ B(\theta, \frac{1}{n})),
	\end{align}
	implying
	$$\alpha_{m,i}^{n}(x_{m}^{n})>0~\text{on}~d_{m,i}^{n}~\text{and}~\alpha_{m,i}^{n}(x_{m}^{n})=0~\text{on}~a_{m}^{n}\wedge(d_{m,i}^{n})^{c}.$$
	Therefore, 
	\begin{align}\label{eq.Kakutani2}
	I_{a_{m}^{n}}\alpha_{m,i}^{n}(x_{m}^{n})=I_{d_{m,i}^{n}}\alpha_{m,i}^{n}(x_{m}^{n}), \forall m\in \mathbb{N},i\in \{1,\cdots,k_{m}^{n}\}.	
	\end{align}

    \par 
    Let $x_{n}=\sum_{m=1}^{\infty}I_{a_{m}^{n}}x_{m}^{n}$ for any $n\in \mathbb{N}$, and further let $\{x_{u},u\in L^{0}(\mathcal{F},\mathbb{N})\}$ be the stable sequence generated by $\{x_{n},n\in \mathbb{N}\}$, then by Proposition \ref{prop.stably compact} there exists a stable subsequence $\{x_{\varphi(v)}, v\in L^0(\mathcal{F},\mathbb{N})\}$ of $\{x_{u},u\in L^{0}(\mathcal{F},\mathbb{N})\}$ such that $\{x_{\varphi(v)}, v\in L^0(\mathcal{F},\mathbb{N})\}$ converges in $\mathcal{T}_{c}$ to some $x\in G$. Next we will show that $x\in F(x)$. 
	Since $F(x)$ is closed and $\{B(\theta,\frac{1}{u}):u\in L^{0}(\mathcal{F},\mathbb{N})\}$ is a $\mathcal{T}_{c}$-neighborhood base at $\theta$,  it suffices to show that
	$$x\in F(x)+B(\theta,\frac{1}{u}),~\forall u\in L^{0}(\mathcal{F},\mathbb{N}). $$

	\par For any given $u\in L^{0}(\mathcal{F},\mathbb{N})$, since $F$ is $\mathcal{T}_{c}$-upper semicontinuous, there exists $u_{1}\in L^{0}(\mathcal{F},\mathbb{N})$ such that
	\begin{equation}\label{eq.Kakutani3}
		F[G\cap (x+B(\theta, \frac{1}{u_{1}}))] \subseteq F(x)+B(\theta,\frac{1}{2u}).
	\end{equation}
	Since $\{x_{\varphi(v)}, v\in L^{0}(\mathcal{F},\mathbb{N})\}$ converges in $\mathcal{T}_{c}$ to $x$, there exists $v_{0}\in L^{0}(\mathcal{F},\mathbb{N})$ such that
	\begin{equation}\label{eq.Kakutani4}
		\varphi(v)\geq 2u_{1}~\text{and}~x_{\varphi(v)}\in x+B(\theta, \frac{1}{2u_{1}}), \forall v \geq v_{0}.
	\end{equation}
	Let $v \geq v_{0}$ be given. For any $n\in \mathbb{N}$, $m\in \mathbb{N}$ and $i\in \{1,\cdots,k_{m}^{n}\}$, by (\ref{eq.Kakutani1}) we have
	\begin{align}
		I_{(\varphi(v)=n)\wedge d_{m,i}^{n}}x_{\varphi(v)}&=I_{(\varphi(v)=n)\wedge d_{m,i}^{n}}\sum_{l=1}^{\infty}I_{(\varphi(v)=l)}x_{l}\nonumber\\
		&=I_{(\varphi(v)=n)\wedge d_{m,i}^{n}}x_{n}\nonumber\\
		&=I_{(\varphi(v)=n)\wedge d_{m,i}^{n}}x_{m}^{n}\nonumber\\
		&\in I_{(\varphi(v)=n)\wedge d_{m,i}^{n}} (x_{m,i}^{n}+ B(\theta, \frac{1}{n})),\nonumber
	\end{align}
	which, combined with (\ref{eq.Kakutani4}), implies that
	\begin{align}
		I_{(\varphi(v)=n)\wedge d_{m,i}^{n}} x_{m,i}^{n}&\in I_{(\varphi(v)=n)\wedge d_{m,i}^{n}} (x_{\varphi(v)}+ B(\theta, \frac{1}{n}))\nonumber\\
		&=I_{(\varphi(v)=n)\wedge d_{m,i}^{n}} (x_{\varphi(v)}+ B(\theta, \frac{1}{\varphi(v)}))\nonumber\\
		&\subseteq I_{(\varphi(v)=n)\wedge d_{m,i}^{n}}(x+ B(\theta, \frac{1}{2u_{1}}) +B(\theta,\frac{1}{\varphi(v)}))\nonumber\\
		&\subseteq I_{(\varphi(v)=n)\wedge d_{m,i}^{n}}(x+ B(\theta, \frac{1}{u_{1}})).\nonumber
	\end{align}
	Furthermore, by Lemma \ref{lemm.basic} and (\ref{eq.Kakutani3}) we have 
	\begin{align}
		I_{(\varphi(v)=n)\wedge d_{m,i}^{n}}y_{m,i}^{n}&\in I_{(\varphi(v)=n)\wedge d_{m,i}^{n}}F(x_{m,i}^{n})\nonumber\\
		&\subseteq I_{(\varphi(v)=n)\wedge d_{m,i}^{n}}F[G\cap (x+B(\theta, \frac{1}{u_{1}}))] \nonumber\\
		&\subseteq I_{(\varphi(v)=n)\wedge d_{m,i}^{n}}(F(x)+ B(\theta,\frac{1}{2u})) \nonumber
	\end{align}
	for any $n\in \mathbb{N}$, $m\in \mathbb{N}$ and $i\in \{1,\cdots,k_{m}^{n}\}$. Arbitrarily choose $y_{*}\in F(x)$, since $F(x)+B(\theta,\frac{1}{2u}) -y_{*}$ is a $\sigma$-stable set with $\theta\in F(x)+B(\theta,\frac{1}{2u}) -y_{*}$, we have 
	\begin{align}
		I_{(\varphi(v)=n)\wedge d_{m,i}^{n}} (y_{m,i}^{n}-y_{*}) 
		&\in I_{(\varphi(v)=n)\wedge d_{m,i}^{n}} (F(x)+B(\theta,\frac{1}{2u}) -y_{*}) \nonumber \\
		&\subseteq F(x)+B(\theta,\frac{1}{2u}) -y_{*}\nonumber
	\end{align}
	for any $n\in \mathbb{N}$, $m\in \mathbb{N}$ and $i\in \{1,\cdots,k_{m}^{n}\}$. Furthermore, $F(x)+B(\theta,\frac{1}{2u}) -y_{*}$ is also $L^{0}$-convex, by (\ref{eq.Kakutani2}) we have 
	\begin{align}
			&x_{\varphi(v)}-y_{*}\nonumber\\
			&=\sum_{n=1}^{\infty}I_{(\varphi(v)=n)}(x_{n}-y_{*})\nonumber\\
			&=\sum_{n=1}^{\infty}I_{(\varphi(v)=n)}\sum_{m=1}^{\infty}I_{a_{m}^{n}}(x_{m}^{n}-y_{*})\nonumber\\
		    &=\sum_{n=1}^{\infty}I_{(\varphi(v)=n)} \sum_{m=1}^{\infty}I_{a_{m}^{n}}\sum_{i=1}^{k_{m}^{n}}
		    \frac{\alpha_{m,i}^{n}(x_{m}^{n}) }{ \sum_{j=1}^{k_{m}^{n}} \alpha_{m,j}^{n} (x_{m}^{n}) } (y_{m,i}^{n}-y_{*})\nonumber\\
		    &=\sum_{n=1}^{\infty}I_{(\varphi(v)=n)} \sum_{m=1}^{\infty}I_{a_{m}^{n}}\sum_{i=1}^{k_{m}^{n}}
		    \frac{I_{(\varphi(v)=n)\wedge a_{m}^{n}}\alpha_{m,i}^{n}(x_{m}^{n}) }{ \sum_{j=1}^{k_{m}^{n}} \alpha_{m,j}^{n} (x_{m}^{n}) } (y_{m,i}^{n}-y_{*})\nonumber\\
		    &=\sum_{n=1}^{\infty}I_{(\varphi(v)=n)} \sum_{m=1}^{\infty}I_{a_{m}^{n}}\sum_{i=1}^{k_{m}^{n}}
		    \frac{I_{(\varphi(v)=n)\wedge d_{m,i}^{n}}\alpha_{m,i}^{n}(x_{m}^{n}) }{ \sum_{j=1}^{k_{m}^{n}} \alpha_{m,j}^{n} (x_{m}^{n}) } (y_{m,i}^{n}-y_{*})\nonumber\\
		    &=\sum_{n=1}^{\infty}I_{(\varphi(v)=n)} \sum_{m=1}^{\infty}I_{a_{m}^{n}}\sum_{i=1}^{k_{m}^{n}}
		    \frac{\alpha_{m,i}^{n}(x_{m}^{n}) }{ \sum_{j=1}^{k_{m}^{n}} \alpha_{m,j}^{n} (x_{m}^{n}) } I_{(\varphi(v)=n)\wedge d_{m,i}^{n}}(y_{m,i}^{n}-y_{*})\nonumber\\
		    &\in \sum_{n=1}^{\infty}I_{(\varphi(v)=n)} \sum_{m=1}^{\infty}I_{a_{m}^{n}}\sum_{i=1}^{k_{m}^{n}}
		    \frac{\alpha_{m,i}^{n}(x_{m}^{n})) }{ \sum_{j=1}^{k_{m}^{n}} \alpha_{m,j}^{n} (x_{m}^{n})) } (F(x)+B(\theta,\frac{1}{2u}) -y_{*})\nonumber\\
		    &\subseteq F(x)+B(\theta,\frac{1}{2u})-y_{*},\nonumber
	\end{align}
	namely, 
	\begin{equation}\label{eq.Kakutani5}
		x_{\varphi(v)}\in  F(x)+B(\theta,\frac{1}{2u}).
	\end{equation}
	Since (\ref{eq.Kakutani5}) holds for any $v\geq v_{0}$ and $\{x_{\varphi(v)},v\in L^{0}(\mathcal{F},\mathbb{N})\}$ converges in $\mathcal{T}_{c}$ to $x$, we have
	$$x\in [F(x)+B(\theta,\frac{1}{2u})]^{-}_{c}\subseteq F(x)+B(\theta,\frac{1}{2u})+B(\theta,\frac{1}{2u})\subseteq F(x)+B(\theta,\frac{1}{u}).$$
\end{proof}

    \section{Concluding remarks and open problems}\label{sec.4}

    \par 
    As the notion of a $\mathcal{T}_{c}$-upper semicontinuous set-valued mapping is of topological nature, the notion of a $\mathcal{T}_{\varepsilon,\lambda}$-upper semicontinuous set-valued mapping can be introduced in a similar way.

    \begin{definition}
    Let $(E_{1},\|\cdot\|_{1})$ and $(E_{2},\|\cdot\|_{2})$ be two $RN$ modules over $\mathbb{K}$ with base $(\Omega,\mathcal{F},P)$, $X\subseteq E_{1}, Y\subseteq E_{2}$  two nonempty sets and  $F:X\rightarrow 2^{Y}\setminus \{\emptyset\}$ a set-valued mapping. $F$ is said to be $\mathcal{T}_{\varepsilon,\lambda}$-upper semicontinuous at $x_{0}\in X$ if for every $\mathcal{T}_{\varepsilon,\lambda}$-neighborhood $U$ of $F(x_{0})$, there is a $\mathcal{T}_{\varepsilon,\lambda}$-neighborhood $V$ of $x_{0}$ such that  $F(V\cap X)\subseteq U$ (equivalently, the upper inverse $F^{+}(U)$ is a $\mathcal{T}_{\varepsilon,\lambda}$-neighborhood of $x_{0}$ in $X$). Furthermore,  $F$ is said to be $\mathcal{T}_{\varepsilon,\lambda}$-upper semicontinuous on $X$ if it is $\mathcal{T}_{\varepsilon,\lambda}$-upper semicontinuous at every point $x\in X$.
    \end{definition}

   \par 
   The choice between $\mathcal{T}_{c}$-upper semicontinuity and $\mathcal{T}_{\varepsilon,\lambda}$-upper semicontinuity may lead to two different possible versions of the random Kakutani fixed point theorem. We have established Theorem \ref{thm.Kakutani}, which can be regarded as the $\mathcal{T}_{c}$-version, whereas the $\mathcal{T}_{\varepsilon,\lambda}$-version has been not yet  established in this paper, namely, Problem \ref{prob.1} below is still open.

   \begin{problem}\label{prob.1}
   	Let $(E,\|\cdot\|)$ be an $RN$ module over $\mathbb{K}$ with base $(\Omega,\mathcal{F},P)$, $G$ a random sequentially compact $L^{0}$-convex subset of $E$ and $F:G\rightarrow 2^{G}\backslash\{\emptyset\}$ a $\sigma$-stable $\mathcal{T}_{\varepsilon,\lambda}$-upper semicontinuous mapping such that $F(x)$ is closed and $L^{0}$-convex for each $x\in G$. Does $F$ have a fixed point?
   \end{problem}

   \par 
   In comparing the two versions of the random Kakutani fixed point theorem, a natural question arises as to whether one version implies the other. In the case of single-valued mappings, the noncompact Schauder fixed point theorem \cite[Theorem 2.12]{GWXYC2025} can be regarded as the $\mathcal{T}_{c}$-version, which in fact includes the $\mathcal{T}_{\varepsilon,\lambda}$-version as a special case. More precisely, Guo et al. introduced the notion of a random sequentially continuous single-valued mapping (see part (5) of \cite[Definition 2.11]{GWXYC2025}) and proved that a $\sigma$-stable single-valued mapping $f$ is random sequentially continuous if and only if it is $\mathcal{T}_{c}$-continuous (see \cite[Lemma 4.3]{GWXYC2025}). Therefore,  a $\sigma$-stable $\mathcal{T}_{\varepsilon,\lambda}$-continuous single-valued mapping is $\mathcal{T}_{c}$-continuous since any $\mathcal{T}_{\varepsilon,\lambda}$-continuous single-valued mapping defined on a $\sigma$-stable set is necessarily random sequentially continuous.

   \par 
   Proposition \ref{prop.usc} below is a known result in classical set-valued analysis.

   \begin{proposition}[\cite{AF2009}]\label{prop.usc}
   	Let $(X,\mathcal{T}_{X})$ and $(Y,\mathcal{T}_{Y})$ be two topological spaces and $F:X\rightarrow 2^{Y}\setminus \{\emptyset\}$ be a set-valued mapping. Then the
   	following statements are equivalent:
   	\begin{enumerate}[(1)]
   		\item $F$ is upper semicontinuous on $X$;
   		\item For any $x\in X$, any net $\{x_{\alpha},\alpha\in \Lambda\}$ in $X$ converges to $x$ and any $O_{Y}\in \mathcal{T}_{Y}$ with $F(x)\subseteq O_{Y}$,
   		there exists $\alpha_{0}\in \Lambda$ such that $F(x_{\alpha})\subseteq O_{Y}$ for any $\alpha\in \Lambda$ with $\alpha\geq \alpha_{0}$.
   	\end{enumerate}
   \end{proposition}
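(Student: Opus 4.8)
The plan is to establish the two implications separately, working throughout with the equivalent reformulation of upper semicontinuity at a point $x$: that the upper inverse $F^{+}(O)=\{z\in X:F(z)\subseteq O\}$ is a neighbourhood of $x$ whenever $O$ is open with $F(x)\subseteq O$ (the equivalence between this form and the ``$F(V\cap X)\subseteq U$'' form is recorded already in Definition \ref{defn.rusc}).

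First I would dispose of $(1)\Rightarrow(2)$, which is immediate. Fix $x\in X$, a net $\{x_{\alpha},\alpha\in\Lambda\}$ converging to $x$, and an open $O_{Y}\in\mathcal{T}_{Y}$ with $F(x)\subseteq O_{Y}$. Upper semicontinuity at $x$ gives that $F^{+}(O_{Y})$ is a neighbourhood of $x$, so the convergence $x_{\alpha}\to x$ yields an $\alpha_{0}\in\Lambda$ with $x_{\alpha}\in F^{+}(O_{Y})$ for every $\alpha\geq\alpha_{0}$; this is exactly $F(x_{\alpha})\subseteq O_{Y}$ for $\alpha\geq\alpha_{0}$.

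For the converse $(2)\Rightarrow(1)$ I would argue contrapositively. Suppose $F$ is not upper semicontinuous at some $x\in X$; then there is an open $O_{Y}\supseteq F(x)$ for which $F^{+}(O_{Y})$ is not a neighbourhood of $x$, which means every neighbourhood $V$ of $x$ contains some point $x_{V}\in V$ with $F(x_{V})\not\subseteq O_{Y}$. Order the neighbourhood filter $\mathcal{N}(x)$ by reverse inclusion; then $\{x_{V}:V\in\mathcal{N}(x)\}$ is a net converging to $x$, since for any given neighbourhood $W$ of $x$ and any $V\in\mathcal{N}(x)$ with $V\subseteq W$ one has $x_{V}\in V\subseteq W$. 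But no index $V_{0}$ can satisfy $F(x_{V})\subseteq O_{Y}$ for all $V\subseteq V_{0}$, because $F(x_{V})\not\subseteq O_{Y}$ for every $V$ by construction. This contradicts (2), so $F$ must be upper semicontinuous at each point of $X$.

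The only step needing real care is the net construction in $(2)\Rightarrow(1)$: one must correctly negate ``$F^{+}(O_{Y})$ is a neighbourhood of $x$'' to extract a point of $V\setminus F^{+}(O_{Y})$ for each $V\in\mathcal{N}(x)$, and then verify that indexing by $(\mathcal{N}(x),\supseteq)$ genuinely produces a net converging to $x$. Everything else is a routine unwinding of definitions, and no properties of $Y$ beyond being a topological space are used.
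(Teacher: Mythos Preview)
Your argument is correct and is precisely the standard textbook proof of this equivalence. Note, however, that the paper does not supply its own proof of Proposition~\ref{prop.usc}: it is stated as a known result from classical set-valued analysis with a citation to Aubin--Frankowska~\cite{AF2009}, so there is nothing in the paper to compare against beyond observing that your argument is the one a reader would find in that reference.
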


   \par 
    Guided by Proposition \ref{prop.usc} and in comparison with \cite[Definition 2.11]{GWXYC2025}, one can naturally introduce the following Definition \ref{defn.rsusc}.

   \begin{definition}\label{defn.rsusc}
   Let $(E_{1},\|\cdot\|_{1})$ and $(E_{2},\|\cdot\|_{2})$ be two $RN$ modules over $\mathbb{K}$ with base $(\Omega,\mathcal{F},P)$, $X\subseteq E_{1}, Y\subseteq E_{2}$ two nonempty $\sigma$-stable sets and $F:X\rightarrow 2^{Y}\setminus \{\emptyset\}$ a set-valued mapping. $F$ is said to be random sequentially upper semicontinuous at $x_{0}\in X$ if $X$ is $\sigma$-stable and if for any sequence $\{x_{n},n\in \mathbb{N}\}$ in $X$ converges in $\mathcal{T}_{\varepsilon,\lambda}$ to $x_{0}$ and any $\mathcal{T}_{\varepsilon,\lambda}$-neighborhood $V$ of $F(x_{0})$, there exist a random subsequence $\{x_{n_{k}},k\in \mathbb{N}\}$ of $\{x_{n},n\in \mathbb{N}\}$ and $k_{0}\in \mathbb{N}$ such that
   $$F(x_{n_{k}})\subseteq V, \forall k\geq k_{0}.$$
   Further, $F$ is said to be random sequentially upper semicontinuous on $X$ if $F$ is random sequentially upper semicontinuous at any point in $X$. 
   \end{definition}

   \par 
   Since $\mathcal{T}_{\varepsilon,\lambda}$ is metrizable, it is easy to check that a $\mathcal{T}_{\varepsilon,\lambda}$-upper semicontinuous set-valued mapping defined on a $\sigma$-stable set is necessarily random sequentially upper semicontinuous. Then, in the spirit of \cite{GWXYC2025}, investigating Problem \ref{prob.2} below may serve as a crucial step toward resolving Problem \ref{prob.1}, which also clarifies the relative strength of the two versions of the random Kakutani fixed point theorem.

   \begin{problem}\label{prob.2}
   	Let $(E_{1},\|\cdot\|_{1})$ and $(E_{2},\|\cdot\|_{2})$ be two $RN$ modules over $\mathbb{K}$ with base $(\Omega,\mathcal{F},P)$, $X\subseteq E_{1}$ and $Y\subseteq E_{2}$ two $\sigma$-stable sets, and $F:X\rightarrow 2^{Y}\setminus \{\emptyset\}$ a $\sigma$-stable set-valued mapping. Is it true that $F$ is random sequentially upper semicontinuous if and only if $F$ is $\mathcal{T}_{c}$-upper semicontinuous?
   \end{problem}



\end{document}